\newtheorem{theorem}{Theorem}
\newtheorem{algorithm}[theorem]{Algorithm}
\newtheorem{corollary}[theorem]{Corollary}
\newtheorem{example}[theorem]{Example}
\newtheorem{remark}[theorem]{Remark}
\newtheorem{assumption}[theorem]{Assumption}
\newcommand*{\N}{\ensuremath{\mathbb{N}}}
\newcommand*{\Z}{\ensuremath{\mathbb{Z}}}
\newcommand*{\R}{\ensuremath{\mathbb{R}}}
\newcommand*{\C}{\ensuremath{\mathbb{C}}}
\renewcommand{\i}{\mathrm{i}}
\renewcommand{\phi}{\varphi}
\renewcommand{\rho}{{\varrho}}
\renewcommand{\epsilon}{{\varepsilon}}
\renewcommand{\d}[1]{\,\mathrm{d}#1 \,}
\newcommand{\J}{\mathcal{J}} 
\newcommand{\0}{{0}} 
\newcommand{\grad}{\nabla}
\newcommand{\W}{{W_{\hspace*{-1pt}{\Lambda}}}} 
\newcommand{\Wast}{{W_{\hspace*{-1pt}{\Lambda}^\ast}}} 
\newlength{\dhatheight}
\begin{document}

\sloppy

\title{Numerical Methods for Quasi-Periodic Incident Fields Scattered by  Locally Perturbed Periodic Surfaces}
\author{Ruming Zhang\thanks{Center for Industrial Mathematics, University of Bremen
; \texttt{rzhang@uni-bremen.de}}}
\date{}
\maketitle

\begin{abstract}
Waves scattering from unbounded structures are always complicated problems for numerical simulations. For the case that the non-periodic incident field scattered by (locally perturbed) periodic surfaces, with the help of the Bloch transform, the problem could be solved by some finite element methods, if the incident fields decay at certain rate at the infinity. For faster decaying incident fields, a high order numerical method is also available. However, in these cases, the plain waves, which belong to a very important family of incident fields but do not decay at the infinity, are not included. In this paper, we aim to develop the Bloch transform based standard finite method for this certain case, and then establish the high order method afterwards. Numerical experiments have been carried out for both the standard and high order numerical methods. Based on the algorithms for incident plain waves, we could also extend the numerical methods to more generalized cases when only the not so efficient standard method is available.
\end{abstract}

\section{Introduction}

In this paper, we will propose a standard and a high order numerical method for the incident plane waves scattered by locally perturbed periodic surfaces. When the surface is not perturbed, as the incident plane wave is quasi-periodic,  the scattered/total field is also quasi-periodic. Thus the scattering problem is reduced to the quasi-periodic scattering problem in one periodic cell, then be solved in a bounded domain. The quasi-periodic scattering problems have been well studied in the past 30 years, see \cite{Kirsc1993,Kirsc1995a} for Dirichlet boundary conditions. However, when the surface is perturbed, the scattered field is no longer quasi-periodic, the problem could not be reduced into one periodic cell, thus it is an unbounded problem, that is always difficult for numerical solutions. Another possibility to solve the problems numerically is to treat the locally perturbed periodic surfaces as rough surfaces, i.e., as was shown in \cite{Chand1996,Chand1996a}, and then solved by the numerical method \cite{Meier2000}. However, in this case, the periodicity is ignored. If we do not want to neglect the periodicity, which might be a great advantage to the numerical methods, the Bloch transform is another possible approach.

The Bloch transform has been used in the electrical engineering approach known as the array scanning method, see \cite{Munk1979,Valer2008}. However, rigorous theoretical analysis has only been considered recently.  In \cite{Coatl2012}, the author applied the Bloch transform to the scattering problems from locally perturbed periodic mediums. The properties of the Bloch transform and the transformed problems have been discussed in \cite{Lechl2015e,Lechl2016}, and numerical methods have been proposed in \cite{Lechl2016a,Lechl2016b,Lechl2017}  for scattering problems from locally perturbed periodic surfaces for both 2D and 3D cases. Based on a further study of the singularity of the Bloch transformed fields in \cite{Zhang2017d}, a high order method has also been established in \cite{Zhang2017e}. However, for all of these methods, the incident fields are always required to be decaying at a certain rate, thus a very important class of incident fields, i.e., the plain waves, are not included. In this paper, based on the known results and methods, we will focus on the numerical solution for the scattering of plane waves, and extend the high order method to more generalized incident fields, compared to \cite{Zhang2017e}.

Due to the non-decaying of the incident field, the total field is not, at least, proved to be decaying at the infinity. This is the key problem to the Bloch transform based methods. However, it is possible to construct a decaying field by subtracting the total field from the non-perturbed surface. As the domains are different for the two cases, we will transform the problem in the perturbed periodic structure into one defined in the non-perturbed one. The difference between the two total fields satisfies the equation with a right hand side with compact support. Applying the techniques in \cite{Lechl2017}, the standard numerical method could be developed. Furthermore, following the analysis  in \cite{Zhang2017d}, we can obtain similar regularity result for the Bloch transformed total field. Based on that, a high order  numerical method for the new problem is also introduced. We have also worked on the convergence and error analysis of the numerical methods. Different from \cite{Lechl2017,Zhang2017e}, when the total field  from the periodic surface is approximated by piecewise linear functions, the term in the right hand side is a $H^1$-function, which will be more difficult for the error estimation. However, it could be improved if a $C^1$ finite element space is used. 

Besides the scattering problems of the plane waves, we can also apply the technique in this paper to more generalized problems. For the cases that the incident fields scattered by periodic surfaces could be solved numerically, we can always develop a high order method, no matter if the conditions in \cite{Zhang2017e} is satisfied. The new method, which involves a step to the numerical solution of the scattering from periodic surfaces and a step the high order method, will be much more efficient, compared to the old one in \cite{Lechl2017}.

The rest of the paper is organized as follows. In the second section, the well-posedness of the scattering problems from rough surfaces will be introduced, and in the third section, we will give a description of the quasi-periodic scattering problems. In Section 4, the new problem of the difference of two total fields will be discussed, and in Section 5, we will study the solvability of the Bloch transformed new problem. In Section 6, we will show the standard finite element method, and the high order method will be introduced in Section 7. In Section 8, we will discuss how to extend the method to more generalized problem. In Section 9, several numerical results will be listed.

\section{Scattering from rough surfaces}

We will consider the scattering problems in the space $\R^2$. 
Let $\zeta$ be a Lipschitz continuous bounded function defined on the real line. Then we can define the surface $\Gamma$ by the graph of function $\zeta$, i.e.,
\begin{equation*}
\Gamma:=\left\{(x_1,\zeta(x_1)):\,x_1\in\R\right\}.
\end{equation*}
The surface is assumed to be sound soft, thus the total field satisfies  Dirichlet boundary condition.

\begin{remark}
In this paper, only the sound-soft surface is considered, as an example. In fact, similar arguments and results could be made for impedance boundary conditions, by extending the result for locally perturbed periodic cases in \cite{Lechl2016}. We can also extend the results to the scattering problems from perturbed rough layers, under certain conditions such that the problems are uniquely solvable, e.g. in \cite{Chand1998,Hu2015}.
\end{remark}

\begin{remark}
\textbf{Only} in this section, $\zeta$ is not required to be periodic.
\end{remark}

Then we can define the domain above $\Gamma$ by
\begin{equation*}
\Omega:=\left\{(x_1,x_2):\,x_2>\zeta(x_1),\,x_1\in\R\right\}.
\end{equation*}
As $\zeta$ is a bounded function, there is a positive constant $H$ such that $H>\|\zeta\|_\infty$. For simplicity, we assume that $\zeta(x_1)>0$ for all $x_1\in\R$. Define the straight line above $\Gamma$ by
\begin{equation*}
\Gamma_H:=\left\{(x_1,H):\,x_1\in\R\right\},
\end{equation*}
and the domain between $\Gamma$ and $\Gamma_H$ by
\begin{equation*}
\Omega_H:=\left\{(x_1,x_2):\,\zeta(x_1)<x_2<H,\,x_1\in\R\right\}.
\end{equation*}

Given an incident field $u^i$ that satisfies the Helmholtz equation 
\begin{equation}\label{eq:sca1}
\Delta u^i+k^2 u^i=0\text{ in }\Omega,
\end{equation}
 then the total field $u$ satisfies Helmholtz equation in $\Omega$ as well, and also satisfied the homogeneous Dirichlet boundary condition on the sound soft boundary $\Gamma$, i.e.,
\begin{equation}\label{eq:sca2}
\Delta u+k^2 u=0\,\text{  in }\Omega,\quad u=0\,\text{ on }\Gamma.
\end{equation}
To guarantee that the scattered field $u^s:=u-u^i$ is propagating upwards, it  satisfies the upward propagating radiation condition (UPRC) above $\Gamma_H$, i.e.,
\begin{equation*}
u^s(x_1,x_2)=\frac{1}{2\pi}\int_\R e^{\i x_1 \xi+\i\sqrt{k^2-|{\xi}|^2}(x_2-H)}\,\hat{u}^s({\xi},H)\d{\xi},\quad { x}_2\geq H,
\end{equation*}
where $\hat{u}^s$ is the Fourier transform of $u^s(\cdot,H)$ and 
\begin{equation*}
\sqrt{k^2-|\xi|^2}=\begin{cases}
\sqrt{k^2-|\xi|^2},\quad \text{ when }|\xi|\leq k,\\
\i\sqrt{|\xi|^2-k^2},\quad \text{ when }|\xi|>k.
\end{cases}
\end{equation*}
 The UPRC is equivalent to the boundary condition, i.e.,
\begin{equation*}
\frac{\partial u^s}{\partial x_2}(x_1,H)=T^+\big[u^s|_{\Gamma_H}\big],\quad x_1\in\Gamma_H,
\end{equation*}
where the Dirichlet-to-Neumann (DtN) map $T^+$ is defined by
\begin{equation}\label{eq:DtN}
T^+\phi=\frac{\i}{{2\pi}}\int_\R \sqrt{k^2-|{\xi}|^s}e^{\i x_1{\xi}}\hat{\phi}({\xi})\d{\xi},
\end{equation}
where $\hat{\phi}(\xi)$ is the Fourier transform of $\phi$ on the real line.  
As was proved in  \cite{Chand2010}, the operator $T^+$ is a continuous operator from $H_r^{1/2}(\Gamma_H)$ into $H_r^{-1/2}(\Gamma_H)$ for any $|r|<1$.
Thus the total field $u$ satisfies 
\begin{equation}\label{eq:boundary_condition}
\frac{\partial u}{\partial x_2}(x_1,H)-T^+\left[u|_{\Gamma_H}\right]=f,\quad \text{ where }f=\frac{\partial u^i}{\partial x_2}(x_1,H)-T^+\left[u^i|_{\Gamma_H}\right],
\end{equation}
which turns the scattering problem into a problem that defined on  the domain $\Omega_H$. 
Then the scattering problem has the variation formulation, i.e., given the incident field $u^i\in H^1(\Omega_H)$, to find a solution $u\in\widetilde{H}^1(\Omega_H)$ such that
\begin{equation}\label{eq:var_origional}
\int_{\Omega_H}\left[\nabla u\cdot\nabla\overline{v}-k^2u\overline{v}\right]\d x-\int_{\Gamma_H}T^+\left[u\big|_{\Gamma_H}\right]\overline{v}\d s=\int_{\Gamma_H}f\overline{v}\d s,
\end{equation}
for all $v\in\widetilde{H}^1(\Omega_H^p)$ with compact support in $\overline{\Omega_H}$. The variational problem could also be analyzed in the weighted Sobolev space $\widetilde{H}_r^1(\Omega_H)$, when $u^i\in H_r^1(\Omega_H)$ for some $|r|<1$.
\begin{remark}
The tilde in $\widetilde{H}_r^1(\Omega_H)$  shows that the functions in this space belong to $H_r^1(\Omega_H)$ and satisfy homogeneous Dirichlet boundary condition on $\Gamma$. Similarly for spaces like $H_0^r(\Wast;\widetilde{H}_\alpha^s(\Omega^\Lambda_H))$.
\end{remark}

In \cite{Chand2010}, the well-posedness of the scattering problem has been proved for certain incident fields and surfaces.

\begin{theorem}\label{th:uni_solv}
For any Lipschitz continuous and bounded function $\zeta$, when the incident field $u^i\in H_r^1(\Omega_H)$ where $|r|<1$, the variational problem \eqref{eq:var_origional} is uniquely solvable in $\widetilde{H}_r^1(\Omega_H)$.
\end{theorem}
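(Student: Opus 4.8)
The plan is to rewrite the variational problem \eqref{eq:var_origional} as an operator equation $A_r u = F$, with $A_r\colon\widetilde H_r^1(\Omega_H)\to\big(\widetilde H_{-r}^1(\Omega_H)\big)^\ast$ the operator induced by the sesquilinear form on the left of \eqref{eq:var_origional}, and to verify the hypotheses of the generalized Lax--Milgram (inf--sup) theorem: boundedness of the form on $\widetilde H_r^1(\Omega_H)\times\widetilde H_{-r}^1(\Omega_H)$, an inf--sup condition together with its adjoint counterpart, and injectivity. Boundedness of the two volume terms is immediate, while boundedness of the boundary term is exactly the continuity of $T^+\colon H_r^{1/2}(\Gamma_H)\to H_r^{-1/2}(\Gamma_H)$ for $|r|<1$ quoted above, combined with the trace theorem. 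This is already where the hypothesis $|r|<1$ is forced: the Fourier symbol $\mathrm i\sqrt{k^2-|\xi|^2}$ of $T^+$ in \eqref{eq:DtN} has a square-root singularity at $\xi=\pm k$, and conjugation by the weight $\rho^r$, $\rho(x)=(1+|x_1|^2)^{1/2}$, behaves on the Fourier side like fractional differentiation of order $|r|$, which stays admissible across that singularity precisely for $|r|<1$.

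The heart of the matter is the inf--sup estimate, equivalently an a priori bound for solutions, and I would establish it first for $r=0$. Testing \eqref{eq:var_origional} with $v=u$ and taking imaginary and real parts, and using the sign properties of the DtN map read off from \eqref{eq:DtN} and Parseval, namely $\Re\int_{\Gamma_H}(T^+\phi)\overline\phi\d{s}\le0$ and $\Im\int_{\Gamma_H}(T^+\phi)\overline\phi\d{s}\ge0$, one controls the propagating part of the Dirichlet trace $u|_{\Gamma_H}$ and bounds $\|\grad u\|_{L^2(\Omega_H)}$ in terms of $\|u\|_{L^2(\Omega_H)}$ and the data $f$. The step I expect to be the main obstacle is bounding $\|u\|_{L^2(\Omega_H)}$ itself: since $\Omega_H$ is unbounded there is no compact embedding $\widetilde H_r^1(\Omega_H)\hookrightarrow L_r^2(\Omega_H)$, so the lower-order term cannot be absorbed by a compactness/contradiction argument and must be estimated by hand. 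Here I would follow Chandler-Wilde and Monk: use the half-space representation of $u$ above $\Gamma_H$ (the UPRC), a Fourier analysis of $u|_{\Gamma_H}$ separating the propagating regime $|\xi|<k$ from the evanescent regime $|\xi|>k$, and the explicit form of \eqref{eq:DtN}, to obtain the bound with a constant depending only on $k$, $H$, $\|\zeta\|_\infty$ and the Lipschitz constant of $\zeta$. The same circle of ideas yields injectivity: a homogeneous solution ($f=0$) satisfies the Helmholtz equation in $\Omega$, vanishes on $\Gamma$ and obeys the UPRC, and a Rellich-type identity together with unique continuation rules out any nontrivial (trapped) mode.

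Carrying the weight $\rho^r$ through the argument is what forces $|r|<1$, and conversely nothing worse happens for $|r|<1$: the commutator terms produced when $\grad$ hits $\rho^r$ in the volume integrals carry a factor $r$ and are of strictly lower order, hence harmless, while the boundary contributions are again controlled by the mapping properties of $T^+$, whose symbol tolerates fractional differentiation of any order $<1$ at $\xi=\pm k$. I would therefore either run the Fourier/energy a priori argument above with the weight incorporated from the start — technically heavier but conceptually unchanged — or else first settle $r=0$ and then reach all $|r|<1$ by a perturbation/continuation argument in $r$, noting that $r\mapsto A_r$ is continuous, invertible at $r=0$, and injective for every $|r|<1$ (for one sign of $r$ the weighted space embeds into $\widetilde H_0^1(\Omega_H)$, so injectivity is inherited from the $r=0$ case; for the other sign one invokes the uniqueness argument of \cite{Chand2010} directly in the larger space via the UPRC). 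Either way $A_r$ is boundedly invertible for all $|r|<1$, which is the assertion of Theorem \ref{th:uni_solv}.
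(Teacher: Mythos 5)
The paper does not prove Theorem \ref{th:uni_solv} at all: it is imported verbatim from \cite{Chand2010}, so there is no in-paper argument to compare against. Your outline is a faithful reconstruction of the strategy used in that reference and its weighted-space extension --- boundedness via the mapping properties of $T^+$, the sign conditions $\Re\int_{\Gamma_H}(T^+\phi)\overline{\phi}\,\mathrm{d}s\le 0$ and $\Im\int_{\Gamma_H}(T^+\phi)\overline{\phi}\,\mathrm{d}s\ge 0$, the a priori $L^2$ bound through the upward half-space representation and a Rellich-type identity, and finally carrying the weight $\rho^r$ through for $|r|<1$ --- so it is consistent with the proof the paper relies on, and I see no gap beyond the expected level of detail for a sketch.
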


\begin{remark}
If the incident field is (quasi-)periodic, for example, is a plane wave, it belongs to the space $H_r^1(\Omega_H)$ for any $-1<r<-1/2$, thus in this case, the well-posedness still holds for any $-1<r<-1/2$. Especially, the cases that the incident fields are plain waves are included.
\end{remark}

In this paper, we will consider the quasi-periodic incident plane waves scattered by locally perturbed periodic surfaces. As was shown in \cite{Lechl2017}, if the Bloch transform is applied directly to the scattering problems, the standard finite element method is only proved when $r>0$. Thus for the case that $r<-1/2$, the finite element method is not, at least, proved to be convergent. So we have to find another way to deal with this kind of problems. As the quasi-periodic incident fields scattered by periodic surfaces are well-studied both theoretically and numerically, it is possible to study the difference of the scattered field from one quasi-periodic incident field with periodic and locally perturbed surfaces. In the next section, we will give a brief introduction to the quasi-periodic scattering problems, and after that, the difference of the two scattered field will be studied. 





\section{Quasi-periodic scattering problems}
From now on, the function $\zeta$ is assumed to be ${\Lambda}$-periodic, where $\Lambda$ is a positive number. Let $\Lambda^*=2\pi/\Lambda$, then define one periodic cell $\W$ and its dual periodic cell $\Wast$ (also called the Brillouin zone) by
\begin{equation*}
\W=\left(-\frac{\Lambda}{2},\frac{\Lambda}{2}\right];\quad\Wast=\left(-\frac{\Lambda^*}{2},\frac{\Lambda}{2}\right]=\left(-\frac{\pi}{\Lambda},\frac{\pi}{\Lambda}\right].
\end{equation*}
Let $\Gamma^\Lambda$, $\Omega^\Lambda$, $\Omega^\Lambda_H$, $\Gamma^\Lambda_H$ be $\Gamma$, $\Omega$, $\Omega_H$, $\Gamma_H$ restricted in one periodic cell $\W\times\R$, i.e.,
\begin{eqnarray*}
&&\Gamma^\Lambda=\Gamma\cap\left[\W\times\R\right],\quad \Gamma^\Lambda_H=\Gamma_H\cap\left[\W\times\R\right];\\
&&\Omega^\Lambda=\Omega\cap\left[\W\times\R\right],\quad \Omega^\Lambda_H=\Omega_H\cap\left[\W\times\R\right].
\end{eqnarray*} 

If the incident field is ${ \alpha}$-quasi-periodic with period $\Lambda$, i.e., for any $j\in\Z$,
\begin{equation*}
u^i(x_1+\Lambda j,x_2)=e^{\i{\alpha}{\Lambda}{ j}}u^i(x_1,{ x}_2),
\end{equation*}
then the total field $u$ is also ${ \alpha}$-quasi-periodic. Thus for this case, the problem could be reduced into one periodic cell $\Omega^\Lambda_H$ naturally, such that $u$ satisfies the following equations.
\begin{eqnarray}\label{eq:quasi-per1}
\Delta u+k^2 u&=&0,\quad\text{ in }\Omega^\Lambda_H,\\
u&=&0,\quad\text{ on }\Gamma^\Lambda,\\
\frac{\partial u}{\partial x_2}(x_1,H)-T_{\alpha}^+\big[u|_{\Gamma_H}\big]&=&f,\quad \text{ on }\Gamma^\Lambda_H,\label{eq:quasi-per2}
\end{eqnarray}
where $T^+_{\alpha}$ is the ${\alpha}$-quasi-periodic DtN map defined in the form of
\begin{equation*}
T^+_{\alpha}(\phi)=\i\sum_{j\in\Z}\beta_j(\alpha)\hat{\phi}(j)e^{\i(\Lambda^*j+\alpha)x_1},\quad \phi=\sum_{ j\in\Z}\hat{\phi}( j)e^{\i(\Lambda^*j+\alpha)x_1},
\end{equation*}
where 
\begin{equation*}
\beta_j(\alpha)=\begin{cases}
\sqrt{k^2-|{\Lambda}^*{j}+{\alpha}|^2},\quad |{\Lambda}^*{ j}+{\alpha}|\leq k;\\
\i\sqrt{|{\Lambda}^*{ j}+{\alpha}|^2-k^2},\quad|{\Lambda}^*{ j}+{\alpha}|> k.
\end{cases}
\end{equation*}
$T^+_{\alpha}$ is the special form of $T^+$ for ${\alpha}$-quasi-periodic, and it is a bounded operator from $H^{1/2}_\alpha(\Gamma^\Lambda_H)$ to $H^{-1/2}_\alpha(\Gamma^\Lambda_H)$. 

The weak formulation for the quasi-periodic scattering problem has the following from for any $v\in \widetilde{H}^1_\alpha(\Omega^\Lambda_H)$ 
\begin{equation}\label{eq:var_per}
\int_{\Omega^\Lambda_H}\left[\grad u\cdot\grad\overline{v}-k^2 u\overline{v}\right]-\int_{\Gamma^\Lambda_H}T^+_\alpha \left[u\big|_{\Gamma^\Lambda_H}\right]\overline{v}\d s=\int_{\Gamma^\Lambda_H}f\overline{v}\d s
\end{equation}
The $\alpha$-quasi-periodic scattering problem is uniquely solved, that has been proved in \cite{Kirsc1993}.
\begin{theorem}\label{th:uni_period}
The $\alpha$-quasi-periodic scattering problem \eqref{eq:var_per} are uniquely solved in $\widetilde{H}_\alpha^1(\Omega^\Lambda_H)$, further more, there is a constant $C$ such that
\begin{equation*}
\|u\|_{\widetilde{H}^1_\alpha(\Omega^\Lambda_H)}\leq C\|u^i\|_{H^1_\alpha(\Omega^\Lambda_H)}.
\end{equation*}
\end{theorem}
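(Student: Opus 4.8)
\emph{Proof proposal.}\quad The plan is to rewrite \eqref{eq:var_per} as an operator equation on $\widetilde H^1_\alpha(\Omega^\Lambda_H)$ that is a compact perturbation of an invertible one, deduce existence from uniqueness by the Fredholm alternative, and read off the stability estimate from the resulting bounded invertibility.

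First I would introduce the bounded sesquilinear form
\begin{equation*}
a(u,v)=\int_{\Omega^\Lambda_H}\big[\grad u\cdot\grad\overline v-k^2u\overline v\big]\d x-\int_{\Gamma^\Lambda_H}T^+_\alpha\big[u|_{\Gamma^\Lambda_H}\big]\overline v\d s
\end{equation*}
and the antilinear functional $\ell(v)=\int_{\Gamma^\Lambda_H}f\overline v\d s$ on $\widetilde H^1_\alpha(\Omega^\Lambda_H)$. Since $u^i$ solves the Helmholtz equation, the trace theorem together with the boundedness of $T^+_\alpha$ from $H^{1/2}_\alpha(\Gamma^\Lambda_H)$ to $H^{-1/2}_\alpha(\Gamma^\Lambda_H)$ gives $\|f\|_{H^{-1/2}_\alpha(\Gamma^\Lambda_H)}\le C\|u^i\|_{H^1_\alpha(\Omega^\Lambda_H)}$, so $\ell$ is bounded with norm controlled by $\|u^i\|_{H^1_\alpha(\Omega^\Lambda_H)}$. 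Using the Fourier-series diagonalization of $T^+_\alpha$ one obtains, for $\phi=\sum_{j\in\Z}\hat\phi(j)e^{\i(\Lambda^*j+\alpha)x_1}$,
\begin{equation*}
-\int_{\Gamma^\Lambda_H}T^+_\alpha[\phi]\,\overline\phi\d s=\Lambda\sum_{j\in\Z}\big(-\i\beta_j(\alpha)\big)\,|\hat\phi(j)|^2,
\end{equation*}
where $-\i\beta_j(\alpha)=\sqrt{|\Lambda^*j+\alpha|^2-k^2}>0$ for the infinitely many evanescent modes $|\Lambda^*j+\alpha|>k$, while $-\i\beta_j(\alpha)$ is purely imaginary for the finitely many propagating modes. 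Hence $\Re a(u,u)\ge\|\grad u\|_{L^2(\Omega^\Lambda_H)}^2-k^2\|u\|_{L^2(\Omega^\Lambda_H)}^2$, and since the embedding $\widetilde H^1_\alpha(\Omega^\Lambda_H)\hookrightarrow L^2(\Omega^\Lambda_H)$ is compact, $a$ satisfies a G\aa rding inequality. Thus the operator $A\colon\widetilde H^1_\alpha(\Omega^\Lambda_H)\to\widetilde H^1_\alpha(\Omega^\Lambda_H)'$ with $\langle Au,v\rangle=a(u,v)$ is Fredholm of index zero, and the problem has a unique solution $u=A^{-1}\ell$ — with the asserted bound and constant $C=\|A^{-1}\|$ — as soon as $A$ is injective.

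For injectivity, let $u$ solve \eqref{eq:var_per} with $f=0$. Taking $v=u$ and reading off the imaginary part of $a(u,u)=0$ yields $\Lambda\sum_{|\Lambda^*j+\alpha|<k}\sqrt{k^2-|\Lambda^*j+\alpha|^2}\,|\hat u(j)|^2=0$, so every propagating Fourier coefficient of $u|_{\Gamma^\Lambda_H}$ vanishes. Consequently the upward propagating radiation condition forces the expansion of $u$ in $x_2\ge H$ to contain only evanescent modes, so $u$ together with its derivatives decays exponentially as $x_2\to+\infty$; in particular $u$ extends to an element of $\widetilde H^1_\alpha(\Omega^\Lambda)$ that solves the Helmholtz equation in the whole strip $\Omega^\Lambda$, vanishes on $\Gamma^\Lambda$, and decays at infinity. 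I would then finish with a Rellich-type identity in the half-strip $\W\times(H,\infty)$ — multiplying $\Delta u+k^2u=0$ by $\overline{x_2\,\partial_{x_2}u}$, integrating, and using the evanescent expansion to discard the boundary contributions as $x_2\to+\infty$ — to conclude $u\equiv0$ for $x_2\ge H$, after which unique continuation for the Helmholtz equation across $\Gamma_H$ into the connected set $\Omega^\Lambda$ gives $u\equiv0$. Everything except this last step is the standard Fredholm machinery; the passage from ``all propagating modes vanish'' to ``$u\equiv0$'' is the one genuinely delicate point, and it is here that I would follow the argument of \cite{Kirsc1993} closely (that reference also handles the Rayleigh--Wood frequencies $|\Lambda^*j+\alpha|=k$, which the energy identity alone does not detect).
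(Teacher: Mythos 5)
The paper itself offers no proof of this theorem: it is quoted as a known result and attributed to \cite{Kirsc1993}, so your Fredholm framework is being compared against that reference rather than against an argument in the text. The first part of your proposal is sound and is indeed the standard route: the boundedness of $\ell$ via the trace theorem, the diagonalization of $T^+_\alpha$, the sign of $-\i\beta_j(\alpha)$ on the evanescent modes giving the G\aa rding inequality, compactness of $\widetilde H^1_\alpha(\Omega^\Lambda_H)\hookrightarrow L^2(\Omega^\Lambda_H)$, and Fredholm of index zero, with the stability constant $C=\|A^{-1}\|$ falling out of bounded invertibility. The energy identity showing that all propagating Rayleigh coefficients vanish is also correct (modulo the Wood-anomaly modes with $\beta_j=0$, which you rightly flag).

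The genuine gap is in your final step. A Rellich identity posed only in the half-strip $\W\times(H,\infty)$ cannot conclude $u\equiv 0$ for $x_2\ge H$: a single evanescent mode $e^{\i(\Lambda^*j+\alpha)x_1-\sqrt{|\Lambda^*j+\alpha|^2-k^2}\,(x_2-H)}$ is a nonzero $\alpha$-quasi-periodic solution of the Helmholtz equation in that region, decays exponentially together with all derivatives, and satisfies the UPRC; nothing in the upper half-strip alone constrains its coefficient. The constraint must come from below. The correct argument (as in \cite{Kirsc1993}) applies the Rellich identity with multiplier $\partial_{x_2}\overline u$ over the full truncated cell $\Omega^\Lambda_b$, uses that $\Gamma$ is a graph so that the normal component $\nu_2$ has a fixed sign and the boundary term $\int_{\Gamma^\Lambda}\nu_2|\partial_\nu u|^2\,\mathrm{d}s$ is signed (here the Dirichlet condition reduces $\grad u$ to $\partial_\nu u\,\nu$ on $\Gamma$), lets $b\to\infty$ using the evanescent decay to control the top boundary terms, concludes $\partial_\nu u=0$ on $\Gamma^\Lambda$, and only then invokes Holmgren/unique continuation to get $u\equiv 0$. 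Note also that this classical Rellich argument requires enough boundary regularity to make sense of $\partial_\nu u\in L^2(\Gamma^\Lambda)$, whereas the theorem is stated for merely Lipschitz $\zeta$; within this paper's own toolkit the cleanest repair is to invoke Theorem~\ref{th:uni_solv} (valid for Lipschitz graphs and for $u^i\in H^1_r(\Omega_H)$ with $-1<r<-1/2$, which covers quasi-periodic data) and observe that by uniqueness the solution inherits the $\alpha$-quasi-periodicity of the incident field, which yields Theorem~\ref{th:uni_period} directly.
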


The quasi-periodic scattering problems have been well-studied in the past 30 years, both theoretically and numerically. For numerical solutions of the problems we refer to \cite{George2011} for finite element methods, and \cite{Meier2000} for boundary integral equation methods. Thus it is possible to treat the numerical approximation of $u$ as a known function, and study the original problem by subtracting the approximated $u$ from the total field from locally perturbed surfaces, which will be discussed in the following section.

\section{Differences between two problems}

Let $\zeta_p$ be a local perturbation of $\zeta$, where the norm $\|\zeta_p-\zeta\|_{W^{1,\infty}(\R)}$ is bounded. For simplicity, we also assume that $\zeta_p(x_1)>0$ for any $x_1\in\R$. Define the sound soft surface $\Gamma_p$ by
\begin{equation*}
\Gamma_p:=\left\{(x_1,\zeta_p(x_1)):\,x_1\in\R\right\}.
\end{equation*}
\begin{remark}
For simplicity, we require that the local perturbation only exists in one periodic cell, i.e., $\rm{supp}(\zeta_p-\zeta)\subset \W\left(=\left(-\frac{\Lambda}{2},\frac{\Lambda}{2}\right]\right)$.
\end{remark}
To guarantee the well-posedness of the scattering problems, $\zeta_p$ is also assumed to be Lipschitz continuous. Similarly, we can also define the domain above $\Gamma_p$ by $\Omega^p$, and the restriction of $\Omega^p$ in the strip $\R\times[0,H]$ by $\Omega^p_H$. Here $H$ is assumed to be a constant number that is larger than both $\|\zeta\|_\infty$ and $\|\zeta_p\|_\infty$. If the incident field $u^i$ exists and satisfies the Helmholtz equation in $\Omega^p$, it is scattered by $\Gamma_p$. Denote the scattered field by $u^s_p$, the total field by $u_p$. Then the total field satisfies the equation
\begin{equation}
\Delta u_p+k^2 u_p=0\,\text{ in }\Omega_p,\quad u_p=0\,\text{ on }\Gamma_p,
\end{equation}
and the boundary condition \eqref{eq:boundary_condition} on $\Gamma_H$. $u_p$ satisfies the following variational problems, 
\begin{equation}\label{eq:var_ori}
\int_{\Omega^p_H}\left[\nabla u_p\cdot\nabla\overline{v}-k^2u_p\overline{v}\right]\d x-\int_{\Gamma_H}T^+\left[u_p\big|_{\Gamma_H}\right]\overline{v}\d s=\int_{\Gamma_H}f\overline{v}\d s
\end{equation}
for any $v\in \widetilde{H}_{loc}^1(\Omega_H)$ with compact support.

The variational form \eqref{eq:var_ori} is defined in the domain $\Omega^p_H$ while the quasi-periodic scattering problem is defined in the periodic domain $\Omega_H$, we have to reformulate the problems in the same domain. Define the diffeomorphism $\Phi_p:\,\Omega_{H_0}\rightarrow\Omega^p_{H_0}$ where $H_0$ is a positive number such that $\max\left\{\|\zeta\|_\infty,\|\zeta_p\|_\infty\right\}<H_0<H$, and it equals to the identity above $\Gamma_{H_0}$. An example of $\Phi_p$, which is also applied in the numerical examples in this paper, has the following representation
\begin{equation*}
\Phi_p:\, x\mapsto\left(x_1,\,x_2+\frac{(x_2-H)^3}{(\zeta(x_1)-H)^3}\left[\zeta_p(x_1)-\zeta(x_1)\right]\right).
\end{equation*}
 Let $u_T:=u_p\circ\Phi_p$, then $u_T$ is a function defined in the periodic domain  $\Omega_H$ and satisfies the variational equation for any $v\in\widetilde{H}^1_{loc}(\Omega_H)$ with a compact support
\begin{equation}\label{eq:var_trans}
\int_{\Omega_H}\left[A_p\grad u_T\cdot\grad\overline{v}-k^2 c_p u_T\overline{v}\right]\d {\bm x}-\int_{\Gamma_H}T^+\left[u_T\big|_{\Gamma_H}\right]\overline{v}\d s=\int_{\Gamma_H}f\overline{v}\d s,
\end{equation}
where $A_p$ and $c_p$ are defined by $\Phi_p$, i.e., 
\begin{eqnarray*}
&& A_p({\bm x})=\left|\det \grad\Phi_p({\bm x})\right|\left[\left(\Phi_p({\bm x})\right)^{-1}\left(\Phi_p({\bm x})\right)^{-\top}\right]\in \left( L^\infty(\Omega_H)\right)^{2\times 2};\\
&& c_p({\bm x})=\left|\det \grad\Phi_p({\bm x})\right|\in L^\infty(\Omega_H).
\end{eqnarray*}
As is assumed that the support of the perturbation $\zeta_p-\zeta$ exists in one periodic cell $\W$, the supports for $A_p-I_2$ (where $I_2$ is a $2\times 2$ identity matrix) and $c_p-1$ are both bounded domains, and they are both subsets of $\Omega^\Lambda_{H_0}$, i.e., $\rm{supp}(A_p-I_2),\,\rm{supp}(c_p-1)\subset\Omega^\Lambda_{H_0}\subset\Omega^\Lambda_H$.

From Theorem \ref{th:uni_solv}, the problem is uniquely solvable in $\widetilde{H}_r^1(\Omega_H^p)$ where $r$ could be any real number that $-1<r<-1/2$, thus $u_p\in \widetilde{H}_r^1(\Omega_H)$. As $u_T=u_p$ outside the domain $\Omega^\Lambda_H$, the field $u_T\in \widetilde{H}_r^1(\Omega_H)$, and also belongs to the space $H^1_{loc}(\Omega_H)$. Let $u_D:=u_T-u$, subtracting \eqref{eq:var_origional} from \eqref{eq:var_ori}, $u_D$ satisfies the following variational form for any $v\in \widetilde{H}^1(\Omega_H)$ with compact support
\begin{equation}\label{eq:var_diff}
\int_{\Omega_H}\left[A_p\grad u_D\cdot\grad\overline{v}-k^2 c_p u_D\overline{v}\right]\d { x}-\int_{\Gamma_H}T^+\left[u_D\big|_{\Gamma_H}\right]\overline{v}\d s=F(u,v)
\end{equation}
where the right hand side is defined by
\begin{equation*}
F(u,v)=\int_{\Omega_H}\left[(I_2-A_p)\grad u\cdot\grad\overline{v}-k^2 (1-c_p) u\overline{v}\right]\d { x}.
\end{equation*}
As the supports for both $A_p-I_2$ and $c_p-1$ are subsets of $\Omega^\Lambda_H$, $F(u,v)$ has the following representation
\begin{equation}\label{eq:rhs}
F(u,v)=\int_{\Omega_H^\Lambda}\left[(I_2-A_p)\grad u\cdot\grad\overline{v}-k^2 (1-c_p) u\overline{v}\right]\d { x}.
\end{equation}

The unique solvability of the variational problem \eqref{eq:var_diff} is described in the following theorem.

\begin{theorem}\label{th:uni_diff}
For any function $u\in H^1_{loc}(\Omega_H)$, the variational problem \eqref{eq:var_diff} is uniquely solvable in $H_r^1(\Omega_H)$, and there is a constant $C$ that does not depend on $u$ such that
\begin{equation*}
\|u_D\|_{\widetilde{H}_r^1(\Omega_H)}\leq C\|u^i\|_{H^1(\Omega^\Lambda_H)}.
\end{equation*}
\end{theorem}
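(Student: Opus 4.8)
The plan is to recognise the left-hand side of \eqref{eq:var_diff} as the sesquilinear form of the scattering problem \eqref{eq:var_ori} from the perturbed surface $\Gamma_p$, merely transported to the reference periodic domain $\Omega_H$ by the diffeomorphism $\Phi_p$, and then to check that $F(u,\cdot)$ is a bounded anti-linear functional controlled by $\|u\|_{H^1(\Omega_H^\Lambda)}$; unique solvability and the estimate then follow by inverting the operator. First I would fix the functional setting: the test functions in \eqref{eq:var_diff} need only have compact support in $\overline{\Omega_H}$ and are dense in $\widetilde{H}^1_{-r}(\Omega_H)$, so --- using $A_p\in(L^\infty(\Omega_H))^{2\times2}$, $c_p\in L^\infty(\Omega_H)$ and the boundedness of $T^+\colon H^{1/2}_r(\Gamma_H)\to H^{-1/2}_r(\Gamma_H)$ for $|r|<1$ from \cite{Chand2010} --- the left-hand side extends to a bounded sesquilinear form on $\widetilde{H}^1_r(\Omega_H)\times\widetilde{H}^1_{-r}(\Omega_H)$ and induces a bounded operator $\mathcal{B}_r\colon\widetilde{H}^1_r(\Omega_H)\to(\widetilde{H}^1_{-r}(\Omega_H))^\ast$. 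The statement then reduces to: (a) $\mathcal{B}_r$ is an isomorphism for $-1<r<-1/2$, and (b) $F(u,\cdot)\in(\widetilde{H}^1_{-r}(\Omega_H))^\ast$ with norm at most $C\|u\|_{H^1(\Omega_H^\Lambda)}$.

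Claim (b) is the routine part. Since $\mathrm{supp}(I_2-A_p)$ and $\mathrm{supp}(1-c_p)$ lie in the bounded set $\Omega_H^\Lambda$, the representation \eqref{eq:rhs} and the Cauchy--Schwarz inequality give $|F(u,v)|\le(\|I_2-A_p\|_{L^\infty}+k^2\|1-c_p\|_{L^\infty})\|u\|_{H^1(\Omega_H^\Lambda)}\|v\|_{H^1(\Omega_H^\Lambda)}$, and the restriction map $\widetilde{H}^1_{-r}(\Omega_H)\to H^1(\Omega_H^\Lambda)$ is bounded because the weight is bounded below by a positive constant on the bounded set $\Omega_H^\Lambda$; this yields (b).

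For (a) I would exploit $\Phi_p$: it fixes the first coordinate, equals the identity above $\Gamma_{H_0}$ --- hence on and above $\Gamma_H$, so the DtN term is unaffected --- and is a bi-Lipschitz diffeomorphism carrying $\Gamma$ onto $\Gamma_p$; therefore $v\mapsto v\circ\Phi_p$ is an isomorphism of $\widetilde{H}^1_s(\Omega_H)$ onto $\widetilde{H}^1_s(\Omega_H^p)$ for every $s$, since the weight depends only on $x_1$, which is left invariant. By the definition of $A_p$ and $c_p$, equation \eqref{eq:var_trans} is exactly the pull-back of \eqref{eq:var_ori} under $\Phi_p$, so $\mathcal{B}_r$ is, up to these isomorphisms, the operator of \eqref{eq:var_ori}. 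Since $\zeta_p$ is Lipschitz and bounded, Theorem \ref{th:uni_solv} --- in the strengthened, isomorphism form established in \cite{Chand2010}, which also covers the dual-space right-hand side $F(u,\cdot)$ that is not of boundary-data type --- applies for all $|r|<1$, in particular for $-1<r<-1/2$, giving that the operator of \eqref{eq:var_ori}, and hence $\mathcal{B}_r$, is an isomorphism onto the dual. Consequently $u_D=\mathcal{B}_r^{-1}F(u,\cdot)$ is the unique solution, $\|u_D\|_{\widetilde{H}^1_r(\Omega_H)}\le\|\mathcal{B}_r^{-1}\|\,\|F(u,\cdot)\|\le C\|u\|_{H^1(\Omega_H^\Lambda)}$, and when $u$ is the $\alpha$-quasi-periodic total field Theorem \ref{th:uni_period} converts this into $\le C\|u^i\|_{H^1(\Omega_H^\Lambda)}$.

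The hard part will be making (a) rigorous --- that transporting \eqref{eq:var_ori} by $\Phi_p$ genuinely respects the weighted scale with the awkward exponent $-1<r<-1/2$ and preserves the DtN term, and that \cite{Chand2010} really delivers invertibility for a general functional in the dual rather than merely for the canonical incident-field data. This works precisely because $\Phi_p$ only displaces points vertically inside a bounded horizontal strip, so the horizontal weight, the behaviour at infinity and the trace on $\Gamma_H$ are all untouched; the remainder is Theorem \ref{th:uni_solv} together with bookkeeping, while claim (b) and the density of compactly supported test functions are entirely standard.
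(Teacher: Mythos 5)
Your proposal follows essentially the same route as the paper: bound $F(u,\cdot)$ as an anti-linear functional on $\widetilde{H}^1_{-r}(\Omega_H)$ using the compact support of $I_2-A_p$ and $1-c_p$, identify the left-hand side of \eqref{eq:var_diff} with the (pulled-back) operator of \eqref{eq:var_ori}, and invoke the isomorphism property from \cite{Chand2010} for general dual-space right-hand sides. Your write-up is in fact more careful than the paper's on two points it leaves implicit --- why $\Phi_p$ respects the weighted scale, and the final passage from $\|u\|_{H^1(\Omega^\Lambda_H)}$ to $\|u^i\|_{H^1(\Omega^\Lambda_H)}$ via Theorem \ref{th:uni_period} --- but the underlying argument is the same.
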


\begin{proof}

From the representation of $F$ in \eqref{eq:rhs}, $F(u,v)$ is a bounded sesquilinear form, i.e., there is a constant $C$ that depends on $\|A_p-I_2\|_\infty$ and $\|c_p-1\|_\infty$ such that for any $r\in\R$,
\begin{equation*}
\left|F(u,v)\right|\leq C\|u\|_{H^1_r(\Omega^\Lambda_H)}\|v\|_{H^1_{-r}(\Omega_H)}\leq C\|u\|_{H^1(\Omega^\Lambda_H)}\|v\|_{H^1_{-r}(\Omega_H)}.
\end{equation*}

Following the proof in \cite{Chand2010}, the variational problem \eqref{eq:var_ori} is uniquely solvable in $\widetilde{H}_r^1(\Omega_H)$, if the right hand side is the conjugate of a bounded linear operator for $v\in H^1_{-r}(\Omega_H)$, for any $r\in(-1,1)$. From the equivalence of the left hand side in \eqref{eq:var_diff} and \eqref{eq:var_ori}, the variational form \eqref{eq:var_diff} 
 is uniquely solvable in $\widetilde{H}_r^1(\Omega_H)$ for any $|r|<1$, and the solution $u_D$ is bounded by the incident field $u^i$, i.e., there is a constant $C>0$ that does not depend on the incident field such that $\|u_D\|_{\widetilde{H}_r^1(\Omega_H)}\leq C\|u^i\|_{H_\alpha^1(\Omega^\Lambda_H)}$. The proof is finished.
\end{proof}

From the arguments in this section, we can see that the difference of two total fields $u_D$ belongs to the space $\widetilde{H}_r^1(\Omega_H)$, for any $|r|<1$. Compared to the total field $u_p\in\widetilde{H}_{r_1}^1(\Omega_H)$ for $-1<r_1<-1/2$, $u_D$ decays much faster, thus it is possible to be analysed by the Bloch transform, and be solved by the finite element method.

\section{The Bloch Transform of $u_D$}

In this section, we will apply the Bloch transform to the variational form \eqref{eq:var_diff}. Firstly, let's write the equation into the equivalent formulation, i.e.,
\begin{equation}
\int_{\Omega_H}\left[\grad u_D\cdot\grad\overline{v}-k^2 u_D\overline{v}\right]\d x-F(u_D,v)-\int_{\Gamma_H} T^+\big[u_D|_{\Gamma_H}\big]\overline{v}\d s=F(u,v)
\end{equation}
Let $w(\alpha,\cdot)=\left(\J_\Omega u_D\right)(\alpha,\cdot)$, using the property that the Bloch transform is an isomorphism between $\widetilde{H}_r^1(\Omega_H)$ and $H_0^r(\Wast;\widetilde{H}^1_\alpha(\Omega^\Lambda_H))$ (see Appendix) with $\J_\Omega^{-1}=\J_\Omega^*$, and it commutes with the partial derivatives, the first term could be written into
\begin{equation*}
\begin{aligned}
&\int_{\Omega_H}\left[\grad u_D\cdot\grad\overline{v}-k^2 u_D\overline{v}\right]\d x\\
=&\int_{\Omega_H}\left[(\J_\Omega^{-1}\circ\J_\Omega)\grad u_D\cdot\grad\overline{v}-k^2 (\J_\Omega^{-1}\circ\J_\Omega)u_D\overline{v}\right]\d x\\
=&\int_{\Omega_H}\left[(\J_\Omega^*\circ\J_\Omega)\grad u_D\cdot\grad\overline{v}-k^2 (\J_\Omega^*\circ\J_\Omega)u_D\overline{v}\right]\d x\\
=&\int_\Wast\int_{\Omega^\Lambda_H}\left[\grad \left[\J_\Omega u_D\right]\cdot\J_\Omega\left[\grad\overline{v}\right]-k^2\left[ \J_\Omega u_D\right]\left[\J_\Omega\overline{v}\right]\right]\d\alpha\d x\\
=&\int_\Wast\int_{\Omega^\Lambda_H}\left[\grad w\cdot\grad\left[\J_\Omega\overline{v}\right]-k^2w\left[\J_\Omega\overline{v}\right]\right]\d\alpha\d x
\end{aligned}
\end{equation*}
Let $\phi=\overline{\J_\Omega\overline{v}}$, we can finally arrive at
\begin{equation*}
\int_{\Omega_H}\left[\grad u_D\cdot\grad\overline{v}-k^2 u_D\overline{v}\right]\d x=\int_\Wast\int_{\Omega^\Lambda_H}\left[\grad w\cdot\grad\overline{\phi}-k^2w\overline{\phi}\right]\d\alpha\d x.
\end{equation*}
Then $w$ satisfies the following variational form for $\phi(\alpha,\cdot)=\left(\J_\Omega v\right)(\alpha,\cdot)$
\begin{equation}\label{eq:var_Bloch}
\int_\Wast a_\alpha(w(\alpha,\cdot),\phi(\alpha,\cdot))\d \alpha-F(\J_\Omega^{-1} w,\J_\Omega^{-1}\phi)=F(u,\J_\Omega^{-1}\phi), 
\end{equation}
where $a_\alpha(\cdot,\cdot)$ has the representation defined for two $\alpha$-quasi-periodic functions $\xi_1,\xi_2$
\begin{equation*}
a_\alpha(\xi_1,\xi_2)=\int_{\Omega^\Lambda_H}\left[\grad \xi_1\cdot\grad\overline{\xi_2}-k^2 \xi_1\overline{\xi_2}\right]\d x-\int_{\Gamma^\Lambda_H}T^+_\alpha\left[\xi_1\big|_{\Gamma^\Lambda_H}\right]\overline{\xi_2}\d s.
\end{equation*}

From the procedure we just went through, it is easy to prove the equivalence between \eqref{eq:var_diff} and \eqref{eq:var_Bloch}. 

\begin{theorem}\label{th:equivalence}
Given a quasi-periodic incident field $u^i$, let $u_D:=u_p\circ\Phi_p-u$, then $u_D\in \widetilde{H}_r^1(\Omega_H)$ for any $|r|<1$ satisfies \eqref{eq:var_diff} if and only if $w:=\J_\Omega u_D\in H_0^r(\Wast;\widetilde{H}_\alpha^1(\Omega^\Lambda_H))$ satisfies \eqref{eq:var_Bloch}. 
\end{theorem}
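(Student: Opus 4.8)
The plan is to exploit three structural facts about the Bloch transform $\J_\Omega$ recalled in the Appendix: it is an isometric isomorphism from $\widetilde{H}_r^1(\Omega_H)$ onto $H_0^r(\Wast;\widetilde{H}_\alpha^1(\Omega^\Lambda_H))$ with $\J_\Omega^{-1}=\J_\Omega^*$, it commutes with the gradient, and on the flat line $\Gamma_H\cong\R$ it turns the rough-surface DtN multiplier $T^+$ into the fibre family $\{T^+_\alpha\}_{\alpha\in\Wast}$ of $\alpha$-quasi-periodic DtN maps. The last property is the one borrowed from \cite{Lechl2016a,Lechl2017}: writing $\J_\Omega$ restricted to $\Gamma_H$ as the Fourier transform followed by periodization, the multiplier $\i\sqrt{k^2-|\xi|^2}$ acts on the $\alpha$-fibre through the symbols $\i\beta_j(\alpha)$, which is exactly $T^+_\alpha$.

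First I would upgrade \eqref{eq:var_diff}, which is assumed only for $v\in\widetilde{H}^1(\Omega_H)$ with compact support, to all $v\in\widetilde{H}^1_{-r}(\Omega_H)$ by density, compactly supported functions being dense in $\widetilde{H}^1_{-r}(\Omega_H)$ and every term of \eqref{eq:var_diff} being bounded in the pairing $\widetilde{H}^1_r(\Omega_H)\times\widetilde{H}^1_{-r}(\Omega_H)$ --- the Helmholtz volume form trivially, the DtN term by the mapping property of $T^+$ quoted from \cite{Chand2010}, and $F(u_D,\cdot)$, $F(u,\cdot)$ by the bound already established in the proof of Theorem \ref{th:uni_diff}.

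Next, for arbitrary $v\in\widetilde{H}^1_{-r}(\Omega_H)$ I would set $\phi(\alpha,\cdot)=(\J_\Omega v)(\alpha,\cdot)$ and substitute into the extended equation. The Helmholtz volume term becomes $\int_\Wast\int_{\Omega^\Lambda_H}[\grad w\cdot\grad\overline\phi-k^2 w\overline\phi]\,d\alpha\,dx$ by exactly the displayed computation preceding the theorem (using $\J_\Omega^{-1}=\J_\Omega^*$, the commutation with $\grad$, and Parseval's identity on $\Wast$); the DtN term becomes $\int_\Wast\int_{\Gamma^\Lambda_H}T^+_\alpha[w(\alpha,\cdot)|_{\Gamma^\Lambda_H}]\overline{\phi(\alpha,\cdot)}\,ds\,d\alpha$ by the fibrewise decomposition of $T^+$ above; adding the two gives $\int_\Wast a_\alpha(w(\alpha,\cdot),\phi(\alpha,\cdot))\,d\alpha$. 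Finally $F(u_D,v)$ and $F(u,v)$ are just relabelled as $F(\J_\Omega^{-1}w,\J_\Omega^{-1}\phi)$ and $F(u,\J_\Omega^{-1}\phi)$ since $v=\J_\Omega^{-1}\phi$ and $u_D=\J_\Omega^{-1}w$. This is \eqref{eq:var_Bloch}, and since $v\mapsto\J_\Omega v$ maps $\widetilde{H}^1_{-r}(\Omega_H)$ bijectively onto $H_0^{-r}(\Wast;\widetilde{H}_\alpha^1(\Omega^\Lambda_H))$, \eqref{eq:var_Bloch} holds for every fibre test function $\phi$. The converse is obtained by reading the same chain backwards: given $w\in H_0^r(\Wast;\widetilde{H}_\alpha^1(\Omega^\Lambda_H))$ satisfying \eqref{eq:var_Bloch}, put $u_D:=\J_\Omega^{-1}w\in\widetilde{H}_r^1(\Omega_H)$ and, for a compactly supported test $v$, take $\phi:=\J_\Omega v$; unwinding the same identities (now with $\J_\Omega\circ\J_\Omega^{-1}=\mathrm{id}$ and the inverse transform $u_D=\J_\Omega^{-1}w$) recovers \eqref{eq:var_diff}.

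The main difficulty is not the algebra, which is routine once the three properties of $\J_\Omega$ are granted, but the bookkeeping around the DtN term and the weighted spaces: checking the fibrewise identity for $T^+$ at the level of the duality pairing actually needed here rather than just as an $L^2$ action, and keeping the exponents $r$ and $-r$ straight so that the density step legitimately promotes the compactly supported test functions of \eqref{eq:var_diff} to the full fibre test space of \eqref{eq:var_Bloch}.
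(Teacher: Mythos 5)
Your proposal is correct and follows essentially the same route as the paper, which simply points back to the displayed computation preceding the theorem (the adjoint identity $\J_\Omega^{-1}=\J_\Omega^*$, commutation with $\grad$, and the fibrewise decomposition of $T^+$ into $\{T^+_\alpha\}$) and declares the equivalence immediate. Your write-up is in fact more careful than the paper's, since you make explicit the density upgrade of the test space and the duality bookkeeping for the DtN term, both of which the paper leaves implicit.
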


With the unique solvability of the variational problem \eqref{eq:var_ori} in Theorem \ref{th:uni_diff} and the equivalence shown in Theorem \eqref{th:equivalence}, we can prove the unique solvability of the variational problem \eqref{eq:var_Bloch}.

\begin{theorem}\label{th:unique}
When $\Gamma,\,\Gamma_p$ are graphs of Lipschitz continuous functions $\zeta,\,\zeta_p$, the variational form \eqref{eq:var_Bloch} is uniquely solvable in $H_0^r(\Wast;\widetilde{H}_\alpha^1(\Omega^\Lambda_H))$ for any $|r|<1$, for any quasi-periodic incident field $u^i\in H_\alpha^1(\Omega^\Lambda_H)$. 
\end{theorem}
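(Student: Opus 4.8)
The plan is to transfer the unique solvability of the spatial-domain problem \eqref{eq:var_diff}, established in Theorem~\ref{th:uni_diff}, to the transformed problem \eqref{eq:var_Bloch} via the equivalence of Theorem~\ref{th:equivalence}. First I would recall that, by Theorem~\ref{th:uni_diff}, for the given quasi-periodic incident field $u^i\in H_\alpha^1(\Omega^\Lambda_H)$ the problem \eqref{eq:var_diff} has a unique solution $u_D\in\widetilde{H}_r^1(\Omega_H)$ for every $|r|<1$, with $\|u_D\|_{\widetilde{H}_r^1(\Omega_H)}\le C\|u^i\|_{H^1(\Omega^\Lambda_H)}$. Since $\J_\Omega$ is an isomorphism from $\widetilde{H}_r^1(\Omega_H)$ onto $H_0^r(\Wast;\widetilde{H}_\alpha^1(\Omega^\Lambda_H))$ (see the Appendix), the function $w:=\J_\Omega u_D$ lies in the target space and, by Theorem~\ref{th:equivalence}, solves \eqref{eq:var_Bloch}. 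This gives existence together with the a~priori bound $\|w\|_{H_0^r(\Wast;\widetilde{H}_\alpha^1(\Omega^\Lambda_H))}\le C\|u^i\|_{H^1(\Omega^\Lambda_H)}$ by boundedness of $\J_\Omega$.

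For uniqueness, suppose $w\in H_0^r(\Wast;\widetilde{H}_\alpha^1(\Omega^\Lambda_H))$ satisfies \eqref{eq:var_Bloch} with $u^i$ (hence $u$, and hence the right-hand side $F(u,\J_\Omega^{-1}\phi)$) replaced by zero. Set $u_D:=\J_\Omega^{-1}w\in\widetilde{H}_r^1(\Omega_H)$. By the ``only if'' direction of Theorem~\ref{th:equivalence} — or rather by reversing the computation that produced \eqref{eq:var_Bloch}, using $\J_\Omega^{-1}=\J_\Omega^*$ and the fact that $\J_\Omega$ commutes with $\grad$ — this $u_D$ satisfies the homogeneous version of \eqref{eq:var_diff}, i.e.\ \eqref{eq:var_diff} with $F(u,v)\equiv 0$. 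Theorem~\ref{th:uni_diff} (applied with $u\equiv 0$, so that the right-hand side vanishes) forces $u_D=0$, and therefore $w=\J_\Omega u_D=0$. Combining existence, uniqueness, and the a~priori estimate completes the argument for every $|r|<1$.

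The step that requires the most care is the precise functional-analytic bookkeeping in the equivalence: one must check that the test functions match up correctly under the substitution $\phi=\overline{\J_\Omega\overline v}$ and that the nonlocal term $F(\J_\Omega^{-1}w,\J_\Omega^{-1}\phi)$ and the DtN term are handled consistently on both sides, in particular that $F(\J_\Omega^{-1}\cdot,\J_\Omega^{-1}\cdot)$, viewed as acting on the transformed variables, is the ``same'' bounded sesquilinear form, so that the left-hand sides of \eqref{eq:var_diff} and \eqref{eq:var_Bloch} are genuinely conjugate-isomorphic and not merely formally analogous. Since Theorem~\ref{th:equivalence} already packages this equivalence, the remaining obstacle is essentially notational — confirming that ``uniquely solvable'' transfers, i.e.\ that the isomorphism $\J_\Omega$ carries the zero right-hand side to the zero right-hand side and respects the duality pairing in which Theorem~\ref{th:uni_diff}'s linear functional is bounded for $v\in H_{-r}^1(\Omega_H)$, equivalently $\phi\in H_0^{-r}(\Wast;\widetilde{H}_\alpha^1(\Omega^\Lambda_H))$.
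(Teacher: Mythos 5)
Your proposal is correct and follows essentially the same route as the paper: the paper's own proof simply invokes the equivalence of Theorem~\ref{th:equivalence} together with the unique solvability of \eqref{eq:var_diff} from Theorem~\ref{th:uni_diff}, which is exactly your argument, merely stated more tersely. Your additional care in separating existence (push-forward of $u_D$ under $\J_\Omega$) from uniqueness (pull-back of homogeneous solutions) and in recording the a~priori bound is a faithful elaboration of the same idea, not a different method.
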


\begin{proof}
From Theorem \ref{th:equivalence}, the variational problem \eqref{eq:var_Bloch} is equivalent to \eqref{eq:var_diff}. From Theorem \ref{th:uni_diff}, the problem \eqref{eq:var_diff} is uniquely solvable in $H_r^1(\Omega_H)$ for any $|r|<1$, thus the problem \eqref{eq:var_Bloch} is uniquely solvable.
\end{proof}

Following the proofs of Theorem 7 and Theorem 8 in \cite{Lechl2017}, when the surfaces are smoother, the solution has a higher regularity.

\begin{theorem}\label{th:smooth}
Suppose $\zeta,\zeta_p\in C^{2,1}(\R)$, then the unique solution of \eqref{eq:var_Bloch} $w$ belongs to the space $H_0^r(\Wast;\widetilde{H}_\alpha^2(\Omega^\Lambda_H))$ for any $1/2<r<1$ and $u_D=\J^{-1}_\Omega w$ belongs to $\widetilde{H}^2_r(\Omega_H)$. Moreover, the problem \eqref{eq:var_Bloch} equivalently satisfies the following family of problems for all $\alpha\in\Wast$ and $v_\alpha\in \widetilde{H}_\alpha^1(\Omega^\Lambda_H)$
\begin{equation}
a_\alpha(w(\alpha,\cdot),v_\alpha)-F(\J^{-1}_\Omega w,v_\alpha)=F(u,v_\alpha).
\end{equation} 
\end{theorem}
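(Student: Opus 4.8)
The plan is to combine standard elliptic regularity in physical space with the mapping properties of the Bloch transform, following the strategy of Theorems~7 and~8 in \cite{Lechl2017}. First I would upgrade the regularity of the quasi-periodic total field $u$: since $\zeta\in C^{2,1}(\R)$, the boundary $\Gamma^\Lambda$ is of class $C^{2,1}$, the flat artificial boundary $\Gamma^\Lambda_H$ carries the bounded DtN map $T^+_\alpha$, and \eqref{eq:var_per} has a smooth right-hand side, so elliptic regularity for the quasi-periodic Helmholtz problem yields $u\in\widetilde{H}^2_\alpha(\Omega^\Lambda_H)$. Consequently $\grad u\in H^1$, and because $A_p\in C^{1,1}(\Omega_H)$ and $c_p\in C^{1,1}(\Omega_H)$ (the diffeomorphism $\Phi_p$ built from $C^{2,1}$ data has $\grad\Phi_p\in C^{1,1}$, with $\det\grad\Phi_p$ bounded away from zero for a small perturbation), an integration by parts — using that the test functions vanish on $\Gamma$ and that $\mathrm{supp}(A_p-I_2)$ and $\mathrm{supp}(c_p-1)$ lie strictly inside $\Omega^\Lambda_{H_0}$ — rewrites the right-hand side of \eqref{eq:var_diff} as $F(u,v)=\int_{\Omega^\Lambda_{H_0}} g\,\overline v\,\d x$ with a compactly supported $g\in L^2(\Omega^\Lambda_{H_0})$.

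Next I would apply interior and boundary $H^2$ elliptic regularity directly to \eqref{eq:var_diff}, i.e.\ to the strong form $\div(A_p\grad u_D)+k^2 c_p u_D=-g$ in $\Omega_H$ together with the DtN condition on $\Gamma_H$ and homogeneous Dirichlet data on $\Gamma$. The Lipschitz (indeed $C^{1,1}$) coefficients, the $C^{2,1}$ Dirichlet boundary, the flatness of $\Gamma_H$, and the order-one smoothing of $T^+$ give $u_D\in H^2_{\loc}(\Omega_H)$; away from the perturbation the equation reduces to $\Delta u_D+k^2 u_D=0$, so the polynomial weight is preserved and $u_D\in\widetilde{H}^2_r(\Omega_H)$ for $|r|<1$, in particular for $1/2<r<1$. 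The isomorphism property of $\J_\Omega$ — which holds on $\widetilde{H}^s_r(\Omega_H)\cong H^r_0(\Wast;\widetilde{H}^s_\alpha(\Omega^\Lambda_H))$ for $s=2$ just as for $s=1$ (see the Appendix and \cite{Lechl2016,Lechl2017}) — then immediately gives $w=\J_\Omega u_D\in H^r_0(\Wast;\widetilde{H}^2_\alpha(\Omega^\Lambda_H))$.

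For the decoupled family of fiber problems I would exploit that $F(g,\cdot)$ is linear in its second argument and that its kernel is supported in the single cell $\Omega^\Lambda_{H_0}$: for a fixed function $g$, the restriction of $\J^{-1}_\Omega\phi$ to that cell equals $\int_\Wast\phi(\alpha,\cdot)\,\d\alpha$, so by linearity and Fubini $F(g,\J^{-1}_\Omega\phi)=\int_\Wast F(g,\phi(\alpha,\cdot))\,\d\alpha$ for $g\in\{u,\J^{-1}_\Omega w\}$. Substituting this into \eqref{eq:var_Bloch} and writing the first term fibrewise turns the identity into $\int_\Wast\big[a_\alpha(w(\alpha,\cdot),\phi(\alpha,\cdot))-F(\J^{-1}_\Omega w,\phi(\alpha,\cdot))-F(u,\phi(\alpha,\cdot))\big]\,\d\alpha=0$ for every $\phi\in H^r_0(\Wast;\widetilde{H}^1_\alpha(\Omega^\Lambda_H))$. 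Since $r>1/2$ the bracket defines a function of $\alpha$ valued continuously in $(\widetilde{H}^1_\alpha)^*$ (the $H^2$-regularity of $w$ makes $a_\alpha(w(\alpha,\cdot),\cdot)$ an $L^2$-pairing, $\alpha\mapsto\beta_j(\alpha)$ is continuous, and the two $F$-terms depend continuously on $\alpha$ through the test function), so choosing $\phi$ of the form (bump in $\alpha$) $\times$ (quasi-periodic profile) and invoking the fundamental lemma of the calculus of variations over $\Wast$ forces the bracket to vanish for every $\alpha\in\Wast$ and every $v_\alpha\in\widetilde{H}^1_\alpha(\Omega^\Lambda_H)$; conversely, integrating the family over $\Wast$ recovers \eqref{eq:var_Bloch}, which gives the claimed equivalence.

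I expect the main obstacle to be the sharp regularity in the Bloch variable: the coefficients $\beta_j(\alpha)$ in $T^+_\alpha$ exhibit square-root behaviour at the Rayleigh frequencies $|\Lambda^*j+\alpha|=k$, so $w$ cannot in general be more than $H^r_0$-regular in $\alpha$ for $r<1$ (the phenomenon analysed in \cite{Zhang2017d}), and one must verify that this limited regularity is still compatible with the pointwise-in-$\alpha$ reformulation — that is, that the family $\alpha\mapsto a_\alpha$ together with $w\in H^r_0(\Wast;\widetilde{H}^2_\alpha)$ and $r>1/2$ yields a residual that is continuous, hence defined for every $\alpha$. The physical-space elliptic regularity and the algebra of the $F$-term are comparatively routine.
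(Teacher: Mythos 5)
Your proposal is correct and follows essentially the same route as the paper, which does not write out a proof at all but simply invokes the arguments of Theorems~7 and~8 of \cite{Lechl2017}: elliptic $H^2$-regularity for $u$ and $u_D$ (after rewriting $F(u,\cdot)$ as an $L^2$-pairing with a compactly supported density), the isomorphism property of $\J_\Omega$ at the level $s=2$, and the embedding $H^r_0\hookrightarrow C^0$ for $r>1/2$ to justify the pointwise-in-$\alpha$ decoupling. Your sketch fills in exactly the details the paper delegates to that citation, and your identification of the $\alpha$-regularity near the Rayleigh frequencies as the delicate point matches the paper's subsequent discussion in Section~7.
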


\section{Standard Finite Element Method}

In this section, we will introduce the standard finite element method for the scattering problems, and then discuss the error estimation for the numerical scheme. To apply the Galerkin discretization method, we need to define the meshes and finite element spaces at first. 

For the bounded periodic cell $\Omega^\Lambda_H$, let $\mathcal{M}_h$ be a family of regular and quasi-uniform meshes for it, where $0<h\leq h_0$ is the maximum mesh width, where $h_0>0$ is a small enough positive number. For convenience, let the nodal points on the left and right boundaries of $\Omega^\Lambda_H$ have the same heights. Thus we can set up the piecewise linear and globally continuous functions on this mesh. By omitting all the nodal points on the left boundary of $\Omega^\Lambda_H$, we can construct the piecewise linear and globally continuous functions on the mesh $\mathcal{M}_h$ that could be extended periodically to be continuous  functions in the periodic domain $\Omega_H$. Define the function that equals to the $\ell$-th nodal point and zero at other nodal points by $\left\{\phi^{(\ell)}_M\right\}_{\ell=1}^M$, and define the discrete subspace by $V_h^0:=\rm{span}\left\{\phi^{(1)}_M,\dots,\phi^{(M)}_M\right\}$, thus $V_h^0$ is a subspace of $\widetilde{H}_0^1(\Omega^\Lambda_H)$. We can also define the quasi-periodic function space by
\begin{equation*}
V_{h}^{\alpha}:=\left\{e^{\i\alpha x_1}v_h:\,v_h\in V^0_h\right\}\subset\widetilde{H}^1_\alpha(\Omega^\Lambda_H).
\end{equation*}
To introduce the finite element space in $\Wast$, we define the uniformly distributed grid points of the interval $\Wast=\left(-\frac{\pi}{\Lambda},\frac{\pi}{\Lambda}\right]$
\begin{equation*}
\alpha^{(1)}_N=-\frac{\pi}{\Lambda}+\frac{\pi}{N\Lambda},\,\alpha^{(j)}_N=\alpha^{(j-1)}_N+\frac{2\pi}{N\Lambda},\quad j=2,\dots,N,
\end{equation*}
then define the piecewise constant basic function as $\left\{\psi^{(j)}_N\right\}_{j=1}^N$ that equals to one  on the interval $[\alpha_N^{(j)}-\pi/(N\Lambda),\alpha_N^{(j)}+\pi/(N\Lambda)]$ and zero otherwise. Thus we can define the finite element space in the 3D domain $\Wast\times\Omega^\Lambda_H$ by
\begin{equation*}
X_{N,h}=\left\{v_{N,h}(\alpha,x)=e^{-\i\alpha x_1}\sum_{j=1}^N\sum_{\ell =1}^M v_{N,h}^{(j,\ell)}\psi_N^{(j)}(\alpha)\psi_M^{(\ell)}(x):\,v_{N,h}^{(j,\ell)}\in\C\right\}\subset L^2(\Wast;\widetilde{H}^1_\alpha(\Omega^\Lambda_H)).
\end{equation*}

Now with the definitions of the finite element spaces, we can introduce the numerical scheme to solve the scattering problem \eqref{eq:sca1}, \eqref{eq:sca2} and \eqref{eq:boundary_condition} with the quasi-periodic incident field $u^i$. Based on the procedure in the above sections, it could be briefly divided into three steps.
\begin{algorithm}\label{alg}Standard finite element method for the scattering problems.
\begin{enumerate}
\item Find $u_h\in V_h^\alpha$ that solves the following variational problem for all $v_h\in V^\alpha_h$
\begin{equation}\label{eq:fem_var_per}
a_\alpha(u_h,v_h)=\int_{\Gamma^\Lambda_H}f\overline{v_h}\d s.
\end{equation}
\item Find $w_{N,h}\in X_{N,h}$ that solves the following variational problem for all $\phi_{N,h}\in X_{N,h}$
\begin{equation}\label{eq:fem_var_Bloch}
\int_\Wast a_\alpha(w_{N,h},\phi_{N,h})\d \alpha+F(\J_\Omega^{-1} w_{N,h},\J_\Omega^{-1} \phi_{N,h})=F(u_h,\J_\Omega^{-1}\phi_{N,h}).
\end{equation}

\item Let $u_{D}^{N,h}=\J_\Omega^{-1} w_{N,h}$ and then the approximation of $u_T$ is obtained by $u_{T}^{N,h}=u_h+u_{D}^{N,h}$.
\end{enumerate}
\end{algorithm}
As functions in the space ${X}_{N,h}$ are piecewise exponential on $\alpha$ for any fixed $x_1$, we can explicitly get the form of the inverse Bloch transforms of any $w_{N,h}\in X_{N,h}$ 
\begin{equation*}
\begin{aligned}
\J_\Omega^{-1}w_{N,h}(\alpha,x)&=\left[\frac{\Lambda}{2\pi}\right]^{1/2}\sum_{j=1}^N\int_{\alpha^{(j)}_N-\pi/(N\Lambda)}^{\alpha^{(j)}_N+\pi/(N\Lambda)}\d\alpha\\
&=\left[\frac{\Lambda}{2\pi}\right]^{1/2}\sum_{j=1}^N\sum_{\ell=1}^M v_{N,h}^{(j,\ell)}\phi_M^{(\ell)}(x)\int_{\alpha^{(j)}_N-\pi/(N\Lambda)}^{\alpha^{(j)}_N+\pi/(N\Lambda)}e^{-\i\alpha x_1}\d\alpha\\
&=\left[\frac{\Lambda}{2\pi}\right]^{1/2}\sum_{j=1}^N g_N^{(j)}(x_1)\sum_{\ell=1}^M  v_{N,h}^{(j,\ell)}\phi_M^{(\ell)}(x),
\end{aligned}
\end{equation*}
where 
\begin{equation*}
g_N^{(j)}(x_1)=\int_{\alpha^{(j)}_N-\pi/(N\Lambda)}^{\alpha^{(j)}_N+\pi/(N\Lambda)}e^{-\i\alpha x_1}\d\alpha=\begin{cases}2 e^{-\i\alpha_N^{(j)}x_1}\sin(\pi x_1/(N\Lambda))/x_1,\quad\text{ when }x_1\neq 0;\\
2\pi/(N\Lambda),\quad\text{ when }x_1=0.
\end{cases}
\end{equation*}
Thus it defines the discrete form of the inverse Bloch transform that will be applied for the numerical schemes in the following sections, i.e.,
\begin{equation}
\J_{\Omega,N}^{-1}\left(\left\{w_{N,h}(\alpha^{(j)}_N,\cdot)\right\}_{j=1}^N\right):=\left[\frac{\Lambda}{2\pi}\right]^{1/2}\sum_{j=1}^N g_N^{(j)}(x_1)\sum_{\ell=1}^M  v_{N,h}^{(j,\ell)}\phi_M^{(\ell)}(x).
\end{equation}

Now we are prepared to work on the error estimation of Algorithm \ref{alg}. For the first step, that solves a quasi-periodic scattering problem, could be investigated based on the arguments from \cite{George2011,Lechl2016a}. From Theorem 14 in \cite{Lechl2016a}, we can get the error estimation for the finite element approximation of  \eqref{eq:quasi-per1}-\eqref{eq:quasi-per2} in ${V}^\alpha_h$.

\begin{theorem}\label{th:fem_quasi_per}
Suppose $\Omega_H$ is a domain of class $C^{1,1}$, if $u^i\in H_\alpha^{3/2}(\Gamma^\Lambda_H)$, then the solution $u\in\widetilde{H}^2_\alpha(\Omega^\Lambda_H)$, and the numerical result in the space, denoted by $u_h$, that solves
\begin{equation}
a_\alpha(u_h,v_h)=\int_{\Gamma^\Lambda_H}f\overline{v_h}\d s\quad\text{ for all }v_h\in\widetilde{V}^\alpha_h,
\end{equation}
satisfies for some positive constant $C$ that does not depend on $u^i$
\begin{equation*}
\|u_h-u\|_{H_\alpha^{\ell}(\Omega^\Lambda_H)}\leq Ch^{2-\ell}\|u^i\|_{H^{3/2}_\alpha(\Gamma^\Lambda_H)},\quad\ell=0,1.
\end{equation*}
\end{theorem}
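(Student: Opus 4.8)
The plan is to follow the classical recipe for Galerkin error estimates of a sesquilinear form that is coercive up to a compact perturbation: (i) lift the data regularity to obtain $u\in\widetilde H^2_\alpha(\Omega^\Lambda_H)$ with a quantitative bound, (ii) establish discrete stability, hence quasi-optimality, uniformly for $h\le h_0$, (iii) insert the Lagrange interpolant of $u$ into the C\'ea estimate, and (iv) upgrade to the $L^2$ estimate by an Aubin--Nitsche duality argument.

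First I would prove $u\in\widetilde H^2_\alpha(\Omega^\Lambda_H)$ together with the a priori bound $\|u\|_{H^2_\alpha(\Omega^\Lambda_H)}\le C\|u^i\|_{H^{3/2}_\alpha(\Gamma^\Lambda_H)}$. The key observation is that the datum $f=\frac{\partial u^i}{\partial x_2}\big|_{\Gamma^\Lambda_H}-T^+_\alpha[u^i|_{\Gamma^\Lambda_H}]$ belongs to $H^{1/2}_\alpha(\Gamma^\Lambda_H)$: from $u^i|_{\Gamma^\Lambda_H}\in H^{3/2}_\alpha$ one reads off from the Fourier-series representation of $T^+_\alpha$ (whose symbol $\beta_j(\alpha)$ grows linearly in $j$) that $T^+_\alpha$ maps $H^s_\alpha(\Gamma^\Lambda_H)$ boundedly into $H^{s-1}_\alpha(\Gamma^\Lambda_H)$ for every $s$, while $\frac{\partial u^i}{\partial x_2}\big|_{\Gamma^\Lambda_H}\in H^{1/2}_\alpha$ by the trace theorem. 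Since $\Omega_H$ is of class $C^{1,1}$ and $u$ solves the Helmholtz equation with homogeneous Dirichlet data on $\Gamma^\Lambda$ and the inhomogeneous condition \eqref{eq:quasi-per2} on the flat boundary $\Gamma^\Lambda_H$ — a first-order elliptic-type (pseudodifferential) boundary condition for which the covering condition holds — the $H^2$ shift theorem for second-order elliptic boundary value problems applies and yields $u\in\widetilde H^2_\alpha(\Omega^\Lambda_H)$; combined with Theorem \ref{th:uni_period} this gives the stated bound.

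Next I would record a G{\aa}rding inequality for $a_\alpha$. From the Fourier-series form of $T^+_\alpha$ one checks $\Re\!\left(-\int_{\Gamma^\Lambda_H}T^+_\alpha[\phi]\overline{\phi}\d s\right)\ge 0$ — propagating modes contribute purely imaginary terms, evanescent modes nonnegative real ones — hence $\Re a_\alpha(v,v)\ge\|v\|^2_{H^1_\alpha(\Omega^\Lambda_H)}-(1+k^2)\|v\|^2_{L^2(\Omega^\Lambda_H)}$. Together with the unique solvability of the continuous problem (Theorem \ref{th:uni_period}) and the compactness of $H^1\hookrightarrow L^2$, a standard Schatz-type argument produces a threshold $h_0>0$ and a constant $C$ such that, for all $h\le h_0$, the discrete problem \eqref{eq:fem_var_per} is uniquely solvable and quasi-optimal, $\|u-u_h\|_{H^1_\alpha(\Omega^\Lambda_H)}\le C\inf_{v_h\in V^\alpha_h}\|u-v_h\|_{H^1_\alpha(\Omega^\Lambda_H)}$. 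Taking $v_h=I_h u$, the nodal Lagrange interpolant — an admissible element of $V^\alpha_h$ because the nodal points on the two lateral boundaries of $\Omega^\Lambda_H$ are aligned in height, so $e^{-\i\alpha x_1}I_h u\in V^0_h$ — and using the standard interpolation estimate $\|u-I_h u\|_{H^\ell_\alpha(\Omega^\Lambda_H)}\le Ch^{2-\ell}\|u\|_{H^2_\alpha(\Omega^\Lambda_H)}$ with the $H^2$ bound of the first step yields the $\ell=1$ estimate.

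For $\ell=0$ I would run the duality argument: let $z\in\widetilde H^1_\alpha(\Omega^\Lambda_H)$ solve the adjoint problem $a_\alpha(v,z)=\int_{\Omega^\Lambda_H}v\,\overline{(u-u_h)}\d x$ for all $v$, which is uniquely solvable and — by the regularity step applied to the adjoint form, whose DtN symbol is the complex conjugate of the original and hence also of linear growth — satisfies $z\in\widetilde H^2_\alpha$ with $\|z\|_{H^2_\alpha(\Omega^\Lambda_H)}\le C\|u-u_h\|_{L^2(\Omega^\Lambda_H)}$. Galerkin orthogonality, continuity of $a_\alpha$, and the interpolation estimate for $z$ then give $\|u-u_h\|^2_{L^2(\Omega^\Lambda_H)}=a_\alpha(u-u_h,z-I_h z)\le Ch\|u-u_h\|_{H^1_\alpha(\Omega^\Lambda_H)}\|z\|_{H^2_\alpha(\Omega^\Lambda_H)}$, so $\|u-u_h\|_{L^2(\Omega^\Lambda_H)}\le Ch\|u-u_h\|_{H^1_\alpha(\Omega^\Lambda_H)}\le Ch^2\|u^i\|_{H^{3/2}_\alpha(\Gamma^\Lambda_H)}$. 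The main obstacle throughout is the nonlocal operator $T^+_\alpha$: it prevents one from quoting purely local elliptic results off the shelf, so the $H^2$ shift theorem, the G{\aa}rding/discrete-stability step, and the regularity of the adjoint solution must each be justified through the Fourier-series mapping properties of $T^+_\alpha$ — which is precisely what is packaged in Theorem 14 of \cite{Lechl2016a}, on which the statement may alternatively just be based.
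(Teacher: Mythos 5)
Your argument is correct and is essentially the standard route: the paper itself offers no proof of this theorem but simply invokes Theorem~14 of \cite{Lechl2016a}, and the chain you lay out (lifting $f$ to $H^{1/2}_\alpha(\Gamma^\Lambda_H)$ via the order-one mapping property of $T^+_\alpha$ to get $u\in\widetilde H^2_\alpha$, a G{\aa}rding inequality from $\Re\int_{\Gamma^\Lambda_H}T^+_\alpha[\phi]\overline{\phi}\,\mathrm{d}s\le 0$, a Schatz-type discrete stability and C\'ea estimate with the nodal interpolant, and Aubin--Nitsche for $\ell=0$) is precisely the analysis packaged in that citation. No gap to report; only minor looseness in asserting $\partial u^i/\partial x_2|_{\Gamma^\Lambda_H}\in H^{1/2}_\alpha$ directly ``by the trace theorem,'' which for an incident field known only through its $H^{3/2}_\alpha$ boundary trace is more cleanly justified via the downward DtN relation for the (plane-wave) incident field.
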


From the error estimation of $u_h$ and $u$, when $h$ is small enough, we can simply pick up the constant $C$ that does not depend on either $u^i$ or $h$ such that $\|u_h\|_{H^1_\alpha(\Omega^\Lambda_H)}\leq C\|u^i\|_{H^{3/2}_\alpha(\Gamma^\Lambda_H)}$. Then we can turn to Step 2 in Algorithm \ref{alg}. Let's define the sesquilinear form on $L^2(\Wast;\widetilde{H}^1_\alpha(\Omega^\Lambda_H))$ by
\begin{equation*}
\mathcal{A}(w,\phi):=\int_\Wast a_\alpha\left(w(\alpha,\cdot),\phi(\alpha,\cdot)\right)\d\alpha-F(\J_\Omega^{-1}w,\J_\Omega^{-1}\phi),
\end{equation*}
thus $w$ and $w_{N,h}$ are the solutions of the variational forms
\begin{eqnarray}\label{eq:var1}
&&\mathcal{A}(w,\phi)=F(u,\phi),\quad w,\,\phi\in L^2(\Wast;\widetilde{H}^1_\alpha(\Omega^\Lambda_H)),\\
&&\mathcal{A}(w_{N,h},\phi_{N,h})=F(u_h,\phi_{N,h}),\quad w_{N.h},\,\phi_{N,h}\in X _{N,h}.\label{eq:var2}
\end{eqnarray}
Let $w_h$ be the exact solution of the variational problem
\begin{eqnarray}\label{eq:var3}
&&\mathcal{A}(z,\phi)=F(u_h,\phi),\quad z,\,\phi\in L^2(\Wast;\widetilde{H}^1_\alpha(\Omega^\Lambda_H)),
\end{eqnarray}
then the next work is to study the convergence rate of $w_{N,h}$ to $z$ with respect to $N$, when $w_h$ is any piecewise linear and globally continuous function in $\Omega^\Lambda_H$. To this end, we have to introduce the variational problem
\begin{equation}\label{eq:var4}
\mathcal{A}(z_{N,h},\phi_{N,h})=F(u,\phi_{N,h}),\quad w,\,\phi_{N,h}\in X _{N,h}.
\end{equation}

From the result that $u\in\widetilde{H}^2_\alpha(\Omega^\Lambda_H)$ and the proof of Theorem 9 in \cite{Lechl2017}, we have the following  convergence property of $z_{N,h}$ to $w$.

\begin{theorem}\label{th:err1}
When $\zeta,\zeta_p\in C^{2,1}(\R)$, then the variational problem \eqref{eq:var4} is uniquely solvable in $ X _{N,h}$ for any $u\in \widetilde{H}^2_\alpha(\Omega^\Lambda_H)$. When $N\in\N$ is large enough and $0<h\leq h_0$ is small enough, the numerical solution satisfies the error estimate
\begin{equation}
\|z_{N,h}-w\|_{L^2(\Wast;\widetilde{H}^\ell(\Omega^\Lambda_H))}\leq Ch^{1-\ell}(N^{-r}+h)\|u\|_{\widetilde{H}^2(\Omega^\Lambda_H)},\quad\ell=0,1.
\end{equation}
\end{theorem}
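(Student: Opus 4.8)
The plan is to combine a standard Céa/Aubin--Nitsche argument for the Galerkin discretization of the sesquilinear form $\mathcal{A}$ with the regularity result of Theorem~\ref{th:smooth} and the known approximation properties of the finite element space $X_{N,h}$. Since Theorem~\ref{th:unique} and Theorem~\ref{th:smooth} together give that $\mathcal{A}$ is bounded and satisfies an inf-sup (Babuška--Brezzi) condition on $L^2(\Wast;\widetilde H^1_\alpha(\Omega^\Lambda_H))$, and that the exact solution $w$ of \eqref{eq:var1} lies in the smoother space $H_0^r(\Wast;\widetilde H^2_\alpha(\Omega^\Lambda_H))$ whenever $1/2<r<1$ and $\zeta,\zeta_p\in C^{2,1}(\R)$, the first task is to transfer the inf-sup condition to the discrete spaces $X_{N,h}$. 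As in the proof of Theorem~9 of \cite{Lechl2017}, this requires $N$ large and $h$ small enough so that the perturbation term $F(\J_\Omega^{-1}\cdot,\J_\Omega^{-1}\cdot)$ — which is compact relative to the $\alpha$-family of quasi-periodic forms $a_\alpha$ — does not destroy the discrete inf-sup constant; a Schatz-type argument gives this once the coercive part is handled by the $h$-approximation and the compact part by the $N$-approximation on $\Wast$.

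Next I would establish unique solvability of \eqref{eq:var4} in $X_{N,h}$: this is immediate from the discrete inf-sup condition just obtained, for any $u\in\widetilde H^2_\alpha(\Omega^\Lambda_H)$, since the right-hand side $\phi_{N,h}\mapsto F(u,\J_\Omega^{-1}\phi_{N,h})$ is a bounded antilinear functional on $X_{N,h}$. Then comes the error estimate. By the discrete inf-sup condition and boundedness of $\mathcal{A}$ one has the quasi-optimality bound
\begin{equation*}
\|z_{N,h}-w\|_{L^2(\Wast;\widetilde H^1(\Omega^\Lambda_H))}\le C\inf_{v_{N,h}\in X_{N,h}}\|w-v_{N,h}\|_{L^2(\Wast;\widetilde H^1(\Omega^\Lambda_H))}.
\end{equation*}
The infimum is then split into the $\alpha$-direction and the $x$-direction. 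Interpolating $w\in H_0^r(\Wast;\widetilde H^2_\alpha(\Omega^\Lambda_H))$ by piecewise constants on the grid $\{\alpha_N^{(j)}\}$ gives a factor $N^{-r}$ from the $H^r$-regularity in $\alpha$, while interpolating the $x$-dependence in $\widetilde H^2_\alpha(\Omega^\Lambda_H)$ by the piecewise linear nodal functions $\{\phi_M^{(\ell)}\}$ gives a factor $h$ in the $H^1$-norm (and $h^2$ in the $L^2$-norm); together, $\inf\|w-v_{N,h}\|_{H^1}\le C(N^{-r}+h)\|w\|_{H^2}\le C(N^{-r}+h)\|u\|_{\widetilde H^2(\Omega^\Lambda_H)}$, the last step using the bound on $w$ in terms of $u$ from Theorem~\ref{th:smooth}. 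This proves the case $\ell=1$. For $\ell=0$, a duality (Aubin--Nitsche) argument against the adjoint problem — which has the same structure and the same regularity gain — produces the extra factor $h$, yielding $\|z_{N,h}-w\|_{L^2}\le Ch(N^{-r}+h)\|u\|_{\widetilde H^2(\Omega^\Lambda_H)}$; the two cases are summarized by $h^{1-\ell}(N^{-r}+h)$.

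The main obstacle I anticipate is the transfer of the inf-sup condition to $X_{N,h}$ uniformly in the coupled pair $(N,h)$: because the Bloch-transformed problem is a continuous family over $\Wast$ coupled through the nonlocal, but compact, term $F(\J_\Omega^{-1}\cdot,\J_\Omega^{-1}\cdot)$, one cannot simply invoke a fixed-domain Schatz argument cellwise; instead one must control how $F$ acts across the piecewise-constant decomposition in $\alpha$ and show the discrete kernel of $\mathcal{A}$ is trivial for $N$ large, $h$ small. This is exactly the delicate point already faced in \cite{Lechl2017}, and I would follow that argument, checking that the $\widetilde H^2$-regularity of $u$ (rather than $u_h$) suffices to bound $F(u,\cdot)$ in the dual of $X_{N,h}$ without loss. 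A secondary, milder point is verifying that the piecewise linear nodal space, after the periodic identification of left/right boundary nodes, genuinely approximates $\widetilde H^2_\alpha(\Omega^\Lambda_H)$ at the optimal rate in the $\alpha$-dependent norms uniformly in $\alpha\in\Wast$; this follows from the equivalence of the $H^s_\alpha$-norms with $\alpha$-independent constants on the bounded cell, so it should not cause real trouble.
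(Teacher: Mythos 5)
Your proposal is correct and follows essentially the route the paper intends: the paper gives no written proof of Theorem~\ref{th:err1}, deferring entirely to the proof of Theorem~9 in \cite{Lechl2017} together with the regularity $u\in\widetilde{H}^2_\alpha(\Omega^\Lambda_H)$, and your sketch (discrete inf-sup via a Schatz-type perturbation argument for the compact term $F(\J_\Omega^{-1}\cdot,\J_\Omega^{-1}\cdot)$, quasi-optimality, the $N^{-r}+h$ splitting of the approximation error using $w\in H_0^r(\Wast;\widetilde{H}^2_\alpha(\Omega^\Lambda_H))$ from Theorem~\ref{th:smooth}, and Aubin--Nitsche for $\ell=0$) is precisely that argument spelled out.
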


Let $\eta:=w-z$ and $\eta_{N,h}:=z_{N,h}-w_{N,h}$, then they are solutions of
\begin{eqnarray}\label{eq:var5}
&& \mathcal{A}(\eta,\phi)=F(u-u_h,\phi),\quad \eta,\,\phi\in L^2(\Wast;\widetilde{H}^1_\alpha(\Omega^\Lambda_H));\\
&& \mathcal{A}(\eta_{N,h},\phi)=F(u-u_h,\phi_{N,h}),\quad \eta_{N,h},\,\phi_{N,h}\in X _{N,h}.
\end{eqnarray}
From the well-posedness of the problem \eqref{eq:var5} and Theorem \ref{th:fem_quasi_per}, there is a constant $C$ such that
\begin{equation}\label{eq:err_per}
\|\eta\|_{L^2(\Wast;\widetilde{H}^1_\alpha(\Omega^\Lambda_H))}\leq C\|u-u_h\|_{H^1(\Omega^\Lambda_H)}\leq Ch\|u^i\|_{H^{3/2}_\alpha(\Gamma^\Lambda_H)}.
\end{equation}
However, the converegence of $\eta_{N,h}$ could not be easily obtained from the similar argument of Theorem \ref{th:err1}, for $u-u_h$ is only an $H^1$-function. However, we can also estimate the error by the  Aubin-Nitsche-Lemma (see \cite{Brenn1994}).

\begin{theorem}\label{th:err2}
When $N\in\N$ is large enough and $h>0$ is small enough, there is a constant $C>0$ such that 
\begin{equation}
\|\eta-\eta_{N,h}\|_{L^2(\Wast;\widetilde{H}^\ell_\alpha(\Omega^\Lambda_H))}\leq Ch^{2-\ell}\|u^i\|_{H^{3/2}_\alpha(\Gamma^\Lambda_H)}.
\end{equation}
\end{theorem}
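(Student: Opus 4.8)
The plan is to estimate $\eta - \eta_{N,h}$ by a duality (Aubin--Nitsche) argument, exploiting that although the data $u - u_h$ is only an $H^1$-function, the corresponding solution $\eta$ of \eqref{eq:var5} is still $H^2$-regular with the right bound, and that $\eta - \eta_{N,h}$ is itself a Galerkin error for the sesquilinear form $\mathcal{A}$ with a fixed right-hand side $F(u-u_h,\cdot)$. First I would record the $H^2$-regularity and stability of $\eta$: since $F(u-u_h,\cdot)$ is a bounded antilinear functional supported in $\Omega^\Lambda_H$ and the surfaces are $C^{2,1}$, Theorem~\ref{th:smooth} (applied to the problem with data $u-u_h$ in place of $u$) gives $\eta \in H_0^r(\Wast;\widetilde{H}_\alpha^2(\Omega^\Lambda_H))$ for $1/2<r<1$, together with the family of $\alpha$-wise problems $a_\alpha(\eta(\alpha,\cdot),v_\alpha) - F(\J_\Omega^{-1}\eta,v_\alpha) = F(u-u_h,v_\alpha)$; combining this with the elliptic-regularity bound and Theorem~\ref{th:fem_quasi_per} yields $\|\eta\|_{L^2(\Wast;\widetilde{H}^2_\alpha(\Omega^\Lambda_H))} \le C\|u-u_h\|_{H^1(\Omega^\Lambda_H)} \le Ch\|u^i\|_{H^{3/2}_\alpha(\Gamma^\Lambda_H)}$.

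Next I would set up the Galerkin framework. By construction $\eta_{N,h}\in X_{N,h}$ solves $\mathcal{A}(\eta_{N,h},\phi_{N,h}) = F(u-u_h,\phi_{N,h})$ for all $\phi_{N,h}\in X_{N,h}$, so subtracting from \eqref{eq:var5} gives the Galerkin orthogonality $\mathcal{A}(\eta - \eta_{N,h},\phi_{N,h}) = 0$ for all $\phi_{N,h}\in X_{N,h}$. Since $\mathcal{A}$ satisfies a discrete inf--sup condition for $N$ large and $h$ small (this is exactly the solvability statement already used in Theorem~\ref{th:err1} for \eqref{eq:var4}), Céa's lemma gives the quasi-optimal $H^1$-bound
\begin{equation*}
\|\eta - \eta_{N,h}\|_{L^2(\Wast;\widetilde{H}^1_\alpha(\Omega^\Lambda_H))} \le C \inf_{\phi_{N,h}\in X_{N,h}} \|\eta - \phi_{N,h}\|_{L^2(\Wast;\widetilde{H}^1_\alpha(\Omega^\Lambda_H))}.
\end{equation*}
The approximation properties of $X_{N,h}$ — piecewise linear in $x$ and piecewise constant in $\alpha$, with $\eta\in H_0^r(\Wast;\widetilde{H}_\alpha^2)$ — give an interpolation error of order $h\,\|\eta\|_{\cdots\widetilde{H}^2} + N^{-r}\|\eta\|_{\cdots\widetilde H^2}$; absorbing the already-established bound $\|\eta\|_{\cdots\widetilde H^2}\le Ch\|u^i\|_{H^{3/2}}$, and using that $N^{-r}\le 1$, this yields the $\ell=1$ case: $\|\eta-\eta_{N,h}\|_{L^2(\Wast;\widetilde H^1_\alpha(\Omega^\Lambda_H))}\le Ch\|u^i\|_{H^{3/2}_\alpha(\Gamma^\Lambda_H)}$.

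For the $\ell=0$ case I would run the duality argument: let $\chi$ solve the adjoint problem $\mathcal{A}(\psi,\chi) = (\psi,\eta-\eta_{N,h})_{L^2}$ for all $\psi$. The same regularity argument as above gives $\chi\in L^2(\Wast;\widetilde H^2_\alpha(\Omega^\Lambda_H))$ with $\|\chi\|_{\cdots\widetilde H^2}\le C\|\eta-\eta_{N,h}\|_{L^2}$. Then
\begin{equation*}
\|\eta-\eta_{N,h}\|_{L^2}^2 = \mathcal{A}(\eta-\eta_{N,h},\chi) = \mathcal{A}(\eta-\eta_{N,h},\chi-\chi_{N,h}) \le C\|\eta-\eta_{N,h}\|_{\cdots\widetilde H^1}\,\|\chi-\chi_{N,h}\|_{\cdots\widetilde H^1},
\end{equation*}
using Galerkin orthogonality in the middle step with any $\chi_{N,h}\in X_{N,h}$; choosing $\chi_{N,h}$ the interpolant bounds $\|\chi-\chi_{N,h}\|_{\cdots\widetilde H^1}\le C(h + N^{-r})\|\chi\|_{\cdots\widetilde H^2}\le Ch\|\eta-\eta_{N,h}\|_{L^2}$ for $N$ large. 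Combining with the $\ell=1$ estimate and dividing by $\|\eta-\eta_{N,h}\|_{L^2}$ gives $\|\eta-\eta_{N,h}\|_{L^2}\le Ch^2\|u^i\|_{H^{3/2}_\alpha(\Gamma^\Lambda_H)}$, completing both cases.

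The main obstacle is the first step: justifying the full $\widetilde H^2_\alpha$-regularity (and stability by $\|u-u_h\|_{H^1}$) of $\eta$ even though $F(u-u_h,\cdot)$ only sees $u-u_h$ through its $H^1$-norm on $\Omega^\Lambda_H$. This requires checking that the $F$-term on the left of \eqref{eq:var5} is a genuinely compact/lower-order perturbation so that the elliptic shift still applies $\alpha$-by-$\alpha$ uniformly, and that the dependence of the $\alpha$-integrated regularity on $\alpha$ (the $H_0^r(\Wast;\cdot)$ scale, cf. \cite{Zhang2017d}) is preserved — i.e. re-running the proofs of Theorems~\ref{th:smooth} and \ref{th:err1} with $u-u_h$ as data rather than a smooth $u$. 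Everything after that (Céa, interpolation in $X_{N,h}$, the duality identity) is standard and the $N$-dependence drops out because $N^{-r}\le 1$.
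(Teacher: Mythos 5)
Your final argument is the same Aubin--Nitsche duality the paper uses, and that part is sound: for $\ell=0$ you dualize against $L^2$ data, use $H^2$-regularity of the \emph{dual} solution plus Galerkin orthogonality to gain one power of $h$ over the $H^1$ error, and for $\ell=1$ the quasi-optimality of the Galerkin projection combined with $\|\eta\|_{L^2(\Wast;\widetilde{H}^1_\alpha(\Omega^\Lambda_H))}\le Ch\|u^i\|_{H^{3/2}_\alpha(\Gamma^\Lambda_H)}$ (the paper's \eqref{eq:err_per}) already gives the claim.

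However, the first step of your proposal --- that $\eta$ itself lies in $L^2(\Wast;\widetilde{H}^2_\alpha(\Omega^\Lambda_H))$ with norm bounded by $C\|u-u_h\|_{H^1(\Omega^\Lambda_H)}$ --- is not justifiable, and you correctly sense this when you call it ``the main obstacle.'' The functional $v\mapsto F(u-u_h,v)$ contains the term $\int (I_2-A_p)\grad(u-u_h)\cdot\grad\overline{v}$, so for $u-u_h\in H^1$ it is only an element of $H^{-1}$, not representable by an $L^2$ density (integrating by parts would require second derivatives of the piecewise linear $u_h$, which are measures on element edges). The elliptic shift therefore stops at $H^1$; this is exactly the difficulty the paper flags just before the theorem (``$u-u_h$ is only an $H^1$-function''), and it is why Theorem~\ref{th:err1} cannot simply be reapplied. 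The saving grace is that your argument never actually needs this step: for $\ell=1$ the infimum in C\'ea's lemma can be taken over the competitor $0$ (equivalently, boundedness of the Galerkin projection), giving $\|\eta-\eta_{N,h}\|_{L^2(\Wast;\widetilde{H}^1_\alpha(\Omega^\Lambda_H))}\le C\|\eta\|_{L^2(\Wast;\widetilde{H}^1_\alpha(\Omega^\Lambda_H))}\le Ch\|u^i\|_{H^{3/2}_\alpha(\Gamma^\Lambda_H)}$ with no regularity of $\eta$ beyond $H^1$; and for $\ell=0$ the duality step only requires $H^2$-regularity of the dual solution $\chi$, whose data is genuinely in $L^2$. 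Delete the claimed $H^2$-bound on $\eta$ (and the associated interpolation estimate for $\eta$), and what remains is essentially the paper's proof.
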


\begin{proof} 1) Firstly, let's consider the $L^2$-estimation. 
Consider the dual problem, i.e., for any $\phi\in L^2(\Wast\times\Omega^\Lambda_H)$, to find the solution $w^\phi\in L^2(\Wast;\widetilde{H}^1_\alpha(\Omega^\Lambda_H))$ of the variational problem
\begin{equation*}
\mathcal{A}(\psi,w^\phi)=\int_\Wast\int_{\Omega^\Lambda_H}\psi\overline{\phi}\d x\d s \text{ for all }\psi\in L^2(\Wast;\widetilde{H}_\alpha^1(\Omega^\Lambda_H)).
\end{equation*}
From the well-posedness of the variational problem \eqref{eq:var1}, the well-posedness of the dual problem could be obtained from the conjugation of the problem. From the interior regularity for elliptic equations, $w^\phi\in L^2(\Wast;\widetilde{H}^2_\alpha(\Omega^\Lambda_H))$ and bounded:
\begin{equation*}
\|w^\phi\|_{L^2(\Wast;\widetilde{H}^2_\alpha(\Omega^\Lambda_H))}\leq C\|\phi\|_{L^2(\Omega^\Lambda_H)}.
\end{equation*}
Thus by replacing $\psi$ by $\eta-\eta_{N,h}$,
\begin{equation*}
\int_\Wast\int_{\Omega^\Lambda_H}{(\eta-\eta_{N,h})}\overline{\phi}\d x\d s =\mathcal{A}(\eta-\eta_{N,h},w^\phi).
\end{equation*}
From the Galerkin orthogonality, for any piecewise linear function $w^\phi_{N,h}$,
\begin{equation*}
\mathcal{A}(\eta-\eta_{N,h},w^\phi_{N,h})=0,
\end{equation*}
then
\begin{equation*}
\int_\Wast\int_{\Omega^\Lambda_H}{(\eta-\eta_{N,h})}\overline{\phi}\d x\d s =\mathcal{A}(\eta-\eta_{N,h},w^\phi-w^\phi_{N,h}).
\end{equation*}
Thus 
\begin{equation*}
\begin{aligned}
&\|\eta-\eta_{N,h}\|_{L^2(\Wast\times\Omega^\Lambda_H)}=\sup_{\|\phi\|_{L^2(\Wast\times\Omega^\Lambda_H)}=1}\left[\int_\Wast\int_{\Omega^\Lambda_H}(\eta-\eta_{N,h})\overline{\phi}\d x\d s\right]\\
\leq &\|\eta-\eta_{N,h}\|_{L^2(\Wast;\widetilde{H}^1_\alpha(\Omega^\Lambda_H))}\sup_{\|\phi\|_{L^2(\Wast\times\Omega^\Lambda_H)}=1}\inf_{w^\phi_{N,h}\in X _{N,h}}\|w^\phi-w^\phi_{N,h}\|_{L^2(\Wast;\widetilde{H}^1_\alpha(\Omega^\Lambda_H))}.
\end{aligned}
\end{equation*}
As $\phi\in L^2(\Wast\times\Omega^\Lambda_H)$, we have the following error estimate
\begin{equation*}
\inf_{w_{N,h}^\phi\in X _{N,h}}\|w^\phi_{N,h}-w^\phi\|_{L^2(\Wast;\widetilde{H}^1_\alpha(\Omega^\Lambda_H))}\leq Ch\|w^\phi\|_{L^2(\Wast;\widetilde{H}^2_\alpha(\Omega^\Lambda_H))}\leq Ch\|\phi\|_{L^2(\Wast\times\Omega^\Lambda_H)}.
\end{equation*}
Thus from the projection that $\|\eta-\eta_{N,h}\|_{L^2(\Wast;\widetilde{H}^1_\alpha(\Omega^\Lambda_H))}\leq C\|\eta\|_{L^2(\Wast;\widetilde{H}^1_\alpha(\Omega^\Lambda_H))}$, and the estimation that \eqref{eq:err_per},
\begin{equation*}
\|\eta-\eta_{N,h}\|_{L^2(\Wast\times\Omega^\Lambda_H)}\leq Ch\|\eta\|_{L^2(\Wast;\widetilde{H}^1_\alpha(\Omega^\Lambda_H))}\leq Ch^2\|u^i\|_{H^{3/2}_\alpha(\Gamma^\Lambda_H)}.
\end{equation*}

2) Then let's consider the $H^1$-estimation. The only difference is to take $\phi\in L^2(\Wast;H^{-1}(\Omega^\Lambda_H))$. From the well-posedness and regularity, $w^\phi\in L^2(\Wast;\widetilde{H}^1_\alpha(\Omega^\Lambda_H))$ and satisfies
\begin{equation*}
\|w^\phi\|_{L^2(\Wast;\widetilde{H}^1_\alpha(\Omega^\Lambda_H))}\leq C\|\phi\|_{L^2(\Wast;H^{-1}(\Omega^\Lambda_H))}.
\end{equation*}
Thus
\begin{equation*}
\begin{aligned}
&\|\eta-\eta_{N,h}\|_{L^2(\Wast;\widetilde{H}^1_\alpha(\Omega^\Lambda_H))}=\sup_{\|\psi\|_{L^2(\Wast;H^{-1}(\Omega^\Lambda_H))}=1}\left[\int_\Wast\int_{\Omega^\Lambda_H}{(\eta-\eta_{N,h})}\overline{\phi}\d x\d s\right]\\
\leq &\|\eta-\eta_{N,h}\|_{L^2(\Wast;\widetilde{H}^1_\alpha(\Omega^\Lambda_H))}\sup_{\|\psi\|_{L^2(\Wast;H^{-1}(\Omega^\Lambda_H))}=1}\inf_{w^\phi_{N,h}\in X _{N,h}}\|w^\phi-w^\phi_{N,h}\|_{L^2(\Wast;\widetilde{H}^1_\alpha(\Omega^\Lambda_H))}.
\end{aligned}
\end{equation*}
From similar argument,
\begin{equation*}
\inf_{w_{N,h}^\phi\in X _{N,h}}\|w^\phi_{N,h}-w^\phi\|_{L^2(\Wast;\widetilde{H}^1_\alpha(\Omega^\Lambda_H))}\leq C\|w^\phi\|_{L^2(\Wast;\widetilde{H}^1_\alpha(\Omega^\Lambda_H))}\leq C\|\phi\|_{L^2(\Wast;H^{-1}(\Omega^\Lambda_H))}.
\end{equation*}
Thus in the same way,
\begin{equation*}
\|\eta-\eta_{N,h}\|_{L^2(\Wast;\widetilde{H}^1_\alpha(\Omega^\Lambda_H))}\leq C\|\eta\|_{L^2(\Wast;\widetilde{H}^1_\alpha(\Omega^\Lambda_H))}\leq Ch\|u^i\|_{H^{3/2}_\alpha(\Gamma^\Lambda_H)}.
\end{equation*}
The proof is finished.
\end{proof}

With the results in Theorem \ref{th:err1} and Theorem \ref{th:err2}, we can get the convergence of Step 2 in Algorithm \ref{alg}.
\begin{theorem}\label{th:err3}
The error between $w_{N,h}$ and $z$ is bounded by
\begin{equation*}
\|w_{N,h}-z\|_{L^2(\Wast;\widetilde{H}^\ell(\Omega^\Lambda_H))}\leq Ch^{1-\ell}(N^{-r}+h)\|u^i\|_{H^{3/2}_\alpha(\Omega^\Lambda_H)}.
\end{equation*}
\end{theorem}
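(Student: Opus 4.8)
The plan is to obtain Theorem~\ref{th:err3} purely by the triangle inequality, splitting the error $w_{N,h}-z$ through the auxiliary quantities already introduced in the section. Recall that $z$ solves \eqref{eq:var3} with right-hand side $F(u_h,\cdot)$, while $w$ solves \eqref{eq:var1} with right-hand side $F(u,\cdot)$; that $z_{N,h}\in X_{N,h}$ solves the discrete problem \eqref{eq:var4} with right-hand side $F(u,\cdot)$; and that $w_{N,h}\in X_{N,h}$ solves \eqref{eq:var2} with right-hand side $F(u_h,\cdot)$. Writing $\eta=w-z$ and $\eta_{N,h}=z_{N,h}-w_{N,h}$, one has the algebraic identity
\begin{equation*}
w_{N,h}-z=(w_{N,h}-z_{N,h})+(z_{N,h}-w)+(w-z)=-\eta_{N,h}+(z_{N,h}-w)+\eta=(\eta-\eta_{N,h})+(z_{N,h}-w).
\end{equation*}
Hence $\|w_{N,h}-z\|_{L^2(\Wast;\widetilde{H}^\ell(\Omega^\Lambda_H))}\le\|\eta-\eta_{N,h}\|_{L^2(\Wast;\widetilde{H}^\ell(\Omega^\Lambda_H))}+\|z_{N,h}-w\|_{L^2(\Wast;\widetilde{H}^\ell(\Omega^\Lambda_H))}$ for $\ell=0,1$.

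First I would invoke Theorem~\ref{th:err1}, which bounds $\|z_{N,h}-w\|_{L^2(\Wast;\widetilde{H}^\ell(\Omega^\Lambda_H))}$ by $Ch^{1-\ell}(N^{-r}+h)\|u\|_{\widetilde{H}^2(\Omega^\Lambda_H)}$; here one must observe that Theorem~\ref{th:err1} applies because $u\in\widetilde{H}^2_\alpha(\Omega^\Lambda_H)$ by Theorem~\ref{th:fem_quasi_per}, and that $\|u\|_{\widetilde{H}^2(\Omega^\Lambda_H)}\le C\|u^i\|_{H^{3/2}_\alpha(\Gamma^\Lambda_H)}$ by the same theorem (the elliptic regularity estimate for the quasi-periodic problem). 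Next I would invoke Theorem~\ref{th:err2}, which gives $\|\eta-\eta_{N,h}\|_{L^2(\Wast;\widetilde{H}^\ell_\alpha(\Omega^\Lambda_H))}\le Ch^{2-\ell}\|u^i\|_{H^{3/2}_\alpha(\Gamma^\Lambda_H)}$ for $\ell=0,1$. Since $h^{2-\ell}=h^{1-\ell}\cdot h\le h^{1-\ell}(N^{-r}+h)$, this term is absorbed into the claimed bound. Adding the two estimates, both controlled by $Ch^{1-\ell}(N^{-r}+h)\|u^i\|_{H^{3/2}_\alpha(\Gamma^\Lambda_H)}$ (up to renaming the constant), yields exactly the stated inequality, valid for $N$ large and $h\le h_0$ small as required by the two input theorems.

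There is essentially no analytic obstacle here: the theorem is a bookkeeping combination of Theorem~\ref{th:err1} and Theorem~\ref{th:err2}, both of which have already been proved. The only point requiring a little care is the consistency of the function-space notation in the final statement, where the right-hand side is written with $\|u^i\|_{H^{3/2}_\alpha(\Omega^\Lambda_H)}$ rather than the trace norm $\|u^i\|_{H^{3/2}_\alpha(\Gamma^\Lambda_H)}$ used in the two ingredient theorems; I would simply note that the trace of $u^i$ on $\Gamma^\Lambda_H$ is controlled by its $H^{3/2}_\alpha(\Omega^\Lambda_H)$-norm by the trace theorem, so either normalization is acceptable and the constant is adjusted accordingly. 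One should also make explicit that the hypotheses $\zeta,\zeta_p\in C^{2,1}(\R)$ and $\Omega_H$ of class $C^{1,1}$ (equivalently $u^i\in H^{3/2}_\alpha(\Gamma^\Lambda_H)$) are in force, inherited from the theorems being cited, so that all regularity claims used in the splitting are justified. Thus the whole proof is three lines: state the triangle inequality through $\eta-\eta_{N,h}$ and $z_{N,h}-w$, apply Theorems~\ref{th:err1} and~\ref{th:err2}, and collect the powers of $h$ and $N$.
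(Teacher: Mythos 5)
Your proposal is correct and follows essentially the same route as the paper: the identical triangle-inequality splitting of $w_{N,h}-z$ into $(\eta-\eta_{N,h})+(z_{N,h}-w)$, followed by Theorem~\ref{th:err2} and Theorem~\ref{th:err1} respectively, with $\|u\|_{\widetilde{H}^2}$ controlled by $\|u^i\|_{H^{3/2}_\alpha(\Gamma^\Lambda_H)}$ and $h^{2-\ell}$ absorbed into $h^{1-\ell}(N^{-r}+h)$. Your remark that the $\|u^i\|_{H^{3/2}_\alpha(\Omega^\Lambda_H)}$ in the theorem statement should be read as the trace norm $\|u^i\|_{H^{3/2}_\alpha(\Gamma^\Lambda_H)}$ is a reasonable reconciliation of a notational inconsistency that the paper itself leaves unaddressed.
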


\begin{proof}
The result simply comes from the triangle inequality, i.e.,
\begin{equation*}
\begin{aligned}
\|w_{N,h}-z\|_{L^2(\Wast;\widetilde{H}^\ell(\Omega^\Lambda_H))}&=\|w_{N,h}-z_{N,h}-z+w+z_{N,h}-w\|_{L^2(\Wast;\widetilde{H}^\ell(\Omega^\Lambda_H))}\\
&\leq\|\eta-\eta_{N,h}\|_{L^2(\Wast;\widetilde{H}^\ell(\Omega^\Lambda_H))}+\|z_{N,h}-w\|_{L^2(\Wast;\widetilde{H}^\ell(\Omega^\Lambda_H))}\\
&\leq C h^{2-\ell}\|u^i\|_{H^{3/2}_\alpha(\Gamma^\Lambda_H)}+Ch^{1-\ell}(N^{-r}+h)\|u\|_{\widetilde{H}^2(\Omega^\Lambda_H)}\\&\leq Ch^{1-\ell}(N^{-r}+h)\|u^i\|_{H^{3/2}_\alpha(\Gamma^\Lambda_H)}.
\end{aligned}
\end{equation*}
The proof is finished.
\end{proof}

Theorem \ref{th:err3} has shown the convergence rate of Step 2. For the error estimation of Algorithm, we have to further study the difference $w_{N,h}-w$. 
From the regularity we have
\begin{equation*}
\|w-z\|_{L^2(\Wast;\widetilde{H}^1_\alpha(\Omega^\Lambda_H))}\leq C \|u-u_h\|_{H^1(\Omega^\Lambda_H)}\leq Ch\|u^i\|_{H^{3/2}_\alpha(\Gamma^\Lambda_H)}.
\end{equation*}
Thus it is easy to obtain the $H^1$-error estimation for $w_{N,h}$.

\begin{theorem}
The error between $w_{N,h}$ and $w$ is bounded by
\begin{equation*}
\|w_{N,h}-w\|_{L^2(\Wast;\widetilde{H}^1(\Omega^\Lambda_H))}\leq C (N^{-r}+h)\|u^i\|_{H^{3/2}_\alpha(\Omega^\Lambda_H)}.
\end{equation*}
\end{theorem}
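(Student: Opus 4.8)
The plan is to obtain the estimate by a two-term triangle inequality that reuses the auxiliary solution $z$ of \eqref{eq:var3}. Writing $w_{N,h}-w=(w_{N,h}-z)+(z-w)$, the first term is exactly what Theorem \ref{th:err3} controls: taking $\ell=1$ there gives $\|w_{N,h}-z\|_{L^2(\Wast;\widetilde{H}^1(\Omega^\Lambda_H))}\le C(N^{-r}+h)\|u^i\|_{H^{3/2}_\alpha(\Omega^\Lambda_H)}$ with $C$ independent of $N$, $h$ and $u^i$. So the only genuinely new ingredient needed is a stability bound for $z-w$, after which the result follows by adding the two pieces and absorbing the lower-order term using $h\le N^{-r}+h$.

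For the stability bound on $z-w$, I would subtract the defining equations \eqref{eq:var1} and \eqref{eq:var3}: since $w$ solves $\mathcal{A}(w,\phi)=F(u,\phi)$ and $z$ solves $\mathcal{A}(z,\phi)=F(u_h,\phi)$ for all admissible $\phi$, the difference satisfies $\mathcal{A}(w-z,\phi)=F(u-u_h,\phi)$. By the well-posedness (inf--sup stability) of $\mathcal{A}$ on $L^2(\Wast;\widetilde{H}^1_\alpha(\Omega^\Lambda_H))$ established en route to Theorem \ref{th:unique}, together with the boundedness of the sesquilinear form $F$ from the proof of Theorem \ref{th:uni_diff} (whose constant depends only on $\|A_p-I_2\|_\infty$ and $\|c_p-1\|_\infty$, hence not on $u$ or on the mesh), one gets $\|w-z\|_{L^2(\Wast;\widetilde{H}^1_\alpha(\Omega^\Lambda_H))}\le C\|u-u_h\|_{H^1(\Omega^\Lambda_H)}$. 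Theorem \ref{th:fem_quasi_per} with $\ell=1$ then yields $\|u-u_h\|_{H^1(\Omega^\Lambda_H)}\le Ch\|u^i\|_{H^{3/2}_\alpha(\Gamma^\Lambda_H)}$ --- precisely the displayed bound recorded immediately before the statement --- so $\|w-z\|_{L^2(\Wast;\widetilde{H}^1(\Omega^\Lambda_H))}\le Ch\|u^i\|_{H^{3/2}_\alpha(\Gamma^\Lambda_H)}$.

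Combining the two estimates gives $\|w_{N,h}-w\|_{L^2(\Wast;\widetilde{H}^1(\Omega^\Lambda_H))}\le C(N^{-r}+h)\|u^i\|_{H^{3/2}_\alpha(\Omega^\Lambda_H)}+Ch\|u^i\|_{H^{3/2}_\alpha(\Gamma^\Lambda_H)}$, and since $h\le N^{-r}+h$ and the trace norm on $\Gamma^\Lambda_H$ is dominated by the $H^{3/2}_\alpha$-norm on $\Omega^\Lambda_H$, both terms collapse into the claimed bound. I do not expect any real obstacle here: the substantive work --- the Aubin--Nitsche duality argument upgrading the $O(h)$ stability of $\eta-\eta_{N,h}$ to $O(h^2)$ in $L^2$ and $O(h)$ in $H^1$ (Theorem \ref{th:err2}), and the $N$-dependence from Theorem \ref{th:err1} --- is already done. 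The one point deserving care is bookkeeping of constants: one must check that every constant in the $z-w$ step is independent of $u$, $u_h$ and $h$ (which uses the uniform bound $\|u_h\|_{H^1_\alpha(\Omega^\Lambda_H)}\le C\|u^i\|_{H^{3/2}_\alpha(\Gamma^\Lambda_H)}$ noted after Theorem \ref{th:fem_quasi_per}), and that the Sobolev index $3/2$ and the domain on which $u^i$ is measured are carried consistently through the chain.
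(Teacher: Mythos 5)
Your proposal is correct and follows essentially the same route as the paper: the paper records the stability bound $\|w-z\|_{L^2(\Wast;\widetilde{H}^1_\alpha(\Omega^\Lambda_H))}\leq C\|u-u_h\|_{H^1(\Omega^\Lambda_H)}\leq Ch\|u^i\|_{H^{3/2}_\alpha(\Gamma^\Lambda_H)}$ immediately before the theorem and then combines it with Theorem \ref{th:err3} (the $\ell=1$ case) by the triangle inequality, exactly as you do. Your additional remarks on constant bookkeeping and the $\Gamma^\Lambda_H$ versus $\Omega^\Lambda_H$ norm in the final bound only make explicit what the paper leaves implicit.
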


\begin{remark}
From the results we have obtained, the algorithm converges at the rate of $h(N^{-r}+h)$ with respect to the $L^2(\Wast\times\Omega^\Lambda_H)$ norm, but we can only prove the error bound $O(N^{-r}+h)$ with respect to the $L^2(\Wast;\widetilde{H}^1(\Omega^\Lambda_H))$ norm. The reason for that is the $H^1$ function $u_h$. If we can replace $u_h$ by the $H^2$-approximation, i.e., the numerical result from the  global $C^1$-finite element method, we can easily obtain the error bound of $h(N^{-r}+h)$ with respect to the $L^2(\Wast\times\Omega^\Lambda_H)$ norm.
\end{remark}

\section{High order method}

In this section, we will introduce the high order numerical method to solve the plane waves scattering from locally perturbed periodic surfaces.  By going deeper into the regularity of $w$ with respect to $\alpha$, as was introduced in \cite{Zhang2017d}, we can adopt the high order method in \cite{Zhang2017e} for the problems.

\subsection{Further study of $w(\alpha,\cdot)$}

Firstly, we will periodize the quasi-periodic functions $w(\alpha,\cdot)$, so the functions will belong to the same periodic function space, and the quasi-periodicities will be implied in the sesquilinear forms.

Define the following two functions
\begin{equation*}
w_0(\alpha,x)=e^{\i\alpha x_1}w(\alpha,x),\quad v_\alpha(x)=e^{\i\alpha x_1}v_0(x),
\end{equation*}
where $v_0$ is a function that belongs to $\widetilde{H}_0^1(\Omega^\Lambda_H)$, and $w_0,\phi_0\in L^2(\Wast;\widetilde{H}^1_0(\Omega^\Lambda_H))$. Thus from \eqref{eq:var_Bloch} and Theorem \ref{th:smooth}, $w_0$ satisfies the following variational equation for any test function $v_0\in\widetilde{H}^1_0(\Omega^\Lambda_H)$
\begin{equation}\label{eq:var_periodize}
\begin{aligned}
\int_{\Omega^\Lambda_H}\left[(\grad+\i\alpha{\bm e}_1)w_0\cdot(\grad-\i\alpha{\bm e}_1)\overline{v_0}-k^2 w_0\overline{v_0}\right]\d x-\int_{\Gamma^\Lambda_H}\widetilde{T}^+_\alpha\left[w_0\big|_{\Gamma^\Lambda_H}\right]\overline{v_0}\d s\\
=\int_{\Omega^\Lambda_H}e^{-\i\alpha x_1}\left[(I_2-A_p)\grad u_T\cdot(\grad-\i\alpha{\bm e}_1)\overline{v_0}-k^2(1-c_p)u_T\overline{v_0}\right]\d x,
\end{aligned}
\end{equation}
where ${\bm e}_1=(1,0)^\top$, and $\widetilde{T}^+_\alpha$ is the modified Dirichlet-to-Neumann map defined on periodic functions, i.e,
\begin{equation*}
\widetilde{T}^+_\alpha(\phi)=\i\sum_{j\in\Z}\sqrt{k^2-|\Lambda^*j-\alpha|^2}\hat{\phi}(j)e^{\i j\Lambda x_1},\quad \phi=\sum_{j\in\Z}\hat{\phi}(j)e^{\i j\Lambda x_1}.
\end{equation*}
Let's define the sesquilinear forms by
\begin{eqnarray*}
&& a_\alpha(\xi_1,\xi_2):=\int_{\Omega^\Lambda_H}\left[(\grad+\i\alpha{\bm e}_1)\xi_1\cdot(\grad-\i\alpha{\bm e}_1)\overline{\xi_2}-k^2\xi_1\overline{\xi_2}\right]\d x,\\
&& b_\alpha(\xi_1,\xi_2):=\int_{\Gamma^\Lambda_H}\widetilde{T}^+_\alpha\left[\xi_1\big|_{\Gamma^\Lambda_H}\right]\overline{\xi_2}\d s,\\
&& F_\alpha(\xi_1,\xi_2):=\int_{\Omega^\Lambda_H}e^{-\i\alpha x_1}\left[(I_2-A_p)\grad \xi_1\cdot(\grad-\i\alpha{\bm e}_1)\overline{\xi_2}-k^2(1-c_p)\xi_1\overline{\xi_2}\right]\d x,
\end{eqnarray*}
then $w_0$ satisfies the variational equation for any $v_0\in\widetilde{H}^1_0(\Omega^\Lambda_H)$
\begin{equation}
a_\alpha(w_0(\alpha,\cdot),v_0)+b_\alpha(w_0(\alpha,\cdot),v_0)=F_\alpha(u_T,v_0).
\end{equation}

The terms both $a_\alpha(w_0(\alpha,\cdot),v_0)$ and $F_\alpha(u_T,v_0)$ depend analytically on $\alpha$. The term $b_\alpha(w_0(\alpha,\cdot),v_0)$ depends analytically on $\alpha$ except for some discrete points, and near each of these discrete points, there is a square-root like singularity. Thus following the arguments in \cite{Kirsc1993}, we can also obtain the dependence of $w_0(\alpha,\cdot)$ on $\alpha$. Firstly, we will introduce the set $\mathcal{S}_\Lambda$ by
\begin{equation*}
\mathcal{S}_\Lambda:=\left\{\alpha\in\overline{\Wast}:\,\exists\, n\in\Z,\text{ s.t., }|\alpha+n\Lambda^*|=k\right\}.
\end{equation*}
Then by the results in \cite{Kirsc1993,Zhang2017d}, the following theorem is easy to be proved.

\begin{theorem}
\label{th:regularity}
$w(\alpha,\cdot)$ depends analytically on $\alpha$ in $\Wast\setminus\mathcal{S}_\Lambda$, and for any $\alpha_0\in \mathcal{S}_\Lambda$, then there is a neighbourhood $U$ of $\alpha_0$ such that for any $\alpha\in U$, there are two functions $w_1(\alpha,\cdot)$ and $w_2(\alpha,\cdot)$ that depends analytically on $U\cap\Wast$ such that
\begin{equation*}
w(\alpha,\cdot)=w_1(\alpha,\cdot)+\sqrt{\alpha-\alpha_0}\,w_2(\alpha,\cdot).
\end{equation*}
\end{theorem}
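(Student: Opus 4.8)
The plan is to treat the periodized variational equation~\eqref{eq:var_periodize}, equivalently the pointwise family in Theorem~\ref{th:smooth}, as an operator equation in the \emph{fixed} space $\widetilde{H}^1_0(\Omega^\Lambda_H)$ and to read off the $\alpha$-dependence of its solution from that of the coefficients. Representing the equation $a_\alpha(w_0(\alpha,\cdot),v_0)+b_\alpha(w_0(\alpha,\cdot),v_0)=F_\alpha(u_T,v_0)$ through the Riesz isomorphism as $L(\alpha)w_0(\alpha,\cdot)=g(\alpha)$, one writes $L(\alpha)=\mathrm{Id}-K(\alpha)$, where $K(\alpha)$ collects the $k^2$-term, the first-order cross terms in $\grad\pm\i\alpha\e_1$, and the DtN term $b_\alpha$; by the compact embedding $\widetilde{H}^1_0(\Omega^\Lambda_H)\hookrightarrow L^2(\Omega^\Lambda_H)$ and the smoothing property of $\widetilde{T}^+_\alpha$, $K(\alpha)$ is compact. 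The form $a_\alpha(\cdot,\cdot)$ is polynomial in $\alpha$ and $F_\alpha(u_T,\cdot)$ is a polynomial in $\alpha$ times the entire multiplier $e^{-\i\alpha x_1}$, so both are entire in $\alpha$; the only source of non-analyticity is the coefficient $\beta_j(\alpha)=\sqrt{k^2-|\Lambda^*j-\alpha|^2}$ in $\widetilde{T}^+_\alpha$, which is analytic in $\alpha$ on $\C$ minus the two points where $|\Lambda^*j-\alpha|=k$; the union of these over $j\in\Z$ meets $\overline{\Wast}$ exactly in $\mathcal{S}_\Lambda$. In particular $g(\alpha)$ contributes no new singularities, so the singular behaviour of $w$ is inherited entirely from that of $L(\alpha)^{-1}$, which is precisely the situation analysed in \cite{Kirsc1993,Zhang2017d}.

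On $\Wast\setminus\mathcal{S}_\Lambda$, $\alpha\mapsto K(\alpha)$ is an operator-analytic family of compact operators and $\alpha\mapsto g(\alpha)$ is analytic; by Theorem~\ref{th:unique} (equivalently the pointwise well-posedness contained in Theorem~\ref{th:smooth}) $\mathrm{Id}-K(\alpha)$ is boundedly invertible for every such $\alpha$, and a Neumann-series/analytic-Fredholm argument then makes $(\mathrm{Id}-K(\alpha))^{-1}$ analytic, so $w_0(\alpha,\cdot)=L(\alpha)^{-1}g(\alpha)$ depends analytically on $\alpha$. Multiplying by the entire multiplier $e^{-\i\alpha x_1}$ yields the analyticity of $w(\alpha,\cdot)$ on $\Wast\setminus\mathcal{S}_\Lambda$.

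Near a point $\alpha_0\in\mathcal{S}_\Lambda$ I would uniformise the branch points. Only finitely many indices $j$ satisfy $|\Lambda^*j-\alpha_0|=k$, and for each of them $k^2-|\Lambda^*j-\alpha|^2$ has a simple zero at $\alpha_0$, so $\beta_j(\alpha)=\sqrt{\alpha-\alpha_0}\,\gamma_j(\alpha)$ with $\gamma_j$ analytic and non-vanishing near $\alpha_0$, the principal branch of $\sqrt{\alpha-\alpha_0}$ matching the radiation-condition convention for $\alpha$ on either side of $\alpha_0$. Substituting $\alpha=\alpha_0+\tau^2$ turns every coefficient into an analytic function of $\tau$ near $\tau=0$, so $L$ becomes an analytic family $\widetilde{L}(\tau)=\mathrm{Id}-\widetilde{K}(\tau)$ of compact perturbations of the identity with $\widetilde{L}(0)=L(\alpha_0)$ still invertible (the singular terms vanish at $\tau=0$ and the limiting $\alpha_0$-problem is uniquely solvable). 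Hence $\tau\mapsto\widetilde{w}(\tau):=w_0(\alpha_0+\tau^2,\cdot)=\widetilde{L}(\tau)^{-1}\widetilde{g}(\tau)$ is analytic near $\tau=0$; splitting its power series into even and odd parts gives $\widetilde{w}(\tau)=P(\tau^2)+\tau\,Q(\tau^2)$ with $P,Q$ analytic $\widetilde{H}^1_0(\Omega^\Lambda_H)$-valued, and re-substituting $\tau^2=\alpha-\alpha_0$ yields $w_0(\alpha,\cdot)=P(\alpha-\alpha_0)+\sqrt{\alpha-\alpha_0}\,Q(\alpha-\alpha_0)$. Setting $w_1(\alpha,\cdot)=e^{-\i\alpha x_1}P(\alpha-\alpha_0)$ and $w_2(\alpha,\cdot)=e^{-\i\alpha x_1}Q(\alpha-\alpha_0)$ gives the claimed decomposition with $w_1,w_2$ analytic on a neighbourhood $U$ of $\alpha_0$ intersected with $\Wast$.

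The main obstacle is the bookkeeping at the branch point: one must check that the full DtN operator $\widetilde{T}^+_\alpha$ — not just its finitely many singular Fourier modes — depends analytically on $\tau$ as a bounded operator $H^{1/2}_0(\Gamma^\Lambda_H)\to H^{-1/2}_0(\Gamma^\Lambda_H)$, which holds because the high-frequency tail $\sum_{|j|\ \mathrm{large}}\beta_j(\alpha)\hat{\phi}(j)e^{\i j\Lambda x_1}$ is uniformly analytic, its $\beta_j$ staying away from their branch points; and one must check that $L(\alpha_0)$ is invertible, i.e.\ that the quasi-periodic problem remains uniquely solvable at the Wood-anomaly value $\alpha_0$ with the convention $\beta_j(\alpha_0)=0$ when $|\Lambda^*j-\alpha_0|=k$. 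Both facts are exactly what \cite{Kirsc1993} supplies (and \cite{Zhang2017d} for the analogous square-root decomposition), so the only genuinely new verification is that the right-hand side $F_\alpha(u_T,\cdot)$ is entire in $\alpha$, which is immediate from its formula and makes the singularity analysis of $w$ reduce verbatim to the source-free case.
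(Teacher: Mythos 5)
Your proposal follows essentially the same route as the paper: the paper's argument consists of observing that $a_\alpha$ and $F_\alpha(u_T,\cdot)$ are analytic in $\alpha$ while the only singularities come from the square-root factors $\beta_j(\alpha)$ in the DtN term $b_\alpha$, and then invoking the arguments of \cite{Kirsc1993,Zhang2017d} (analytic Fredholm theory away from $\mathcal{S}_\Lambda$ and the square-root decomposition near each $\alpha_0\in\mathcal{S}_\Lambda$), which is exactly what you carry out explicitly via the operator formulation $L(\alpha)=\mathrm{Id}-K(\alpha)$ and the uniformisation $\alpha=\alpha_0+\tau^2$. Your write-up is correct and in fact supplies considerably more detail than the paper, which leaves the Neumann-series and branch-point bookkeeping entirely to the cited references.
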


Or the following corollary holds.

\begin{corollary}
For any $[\alpha_1,\alpha_2]$ such that $\alpha_1,\,\alpha_\in\mathcal{S}_\Lambda$ and $(\alpha_1,\alpha_2(\cap\mathcal{S}_\Lambda=\emptyset$, then there are three functions $v_1(\alpha,\cdot),\,v_2(\alpha,\cdot),\,v_3(\alpha,\cdot)$ that depend smoothly on $\alpha\in[\alpha_1,\alpha_2]$ such that
\begin{equation}
w(\alpha,\cdot)=v_1(\alpha,\cdot)+\sqrt{\alpha-\alpha_1}\, v_2(\alpha,\cdot)+\sqrt{\alpha-\alpha_2}\, v_3(\alpha,\cdot).
\end{equation}
\end{corollary}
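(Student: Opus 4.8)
The plan is to reduce the statement on $[\alpha_1,\alpha_2]$ to the local statement of Theorem~\ref{th:regularity} by a partition-of-unity (or cutoff) argument, exploiting that by hypothesis the only points of $\mathcal{S}_\Lambda$ touching the closed interval are the two endpoints $\alpha_1,\alpha_2$. First I would fix smooth cutoff functions $\chi_1,\chi_2:[\alpha_1,\alpha_2]\to[0,1]$ with $\chi_1\equiv 1$ near $\alpha_1$, $\chi_2\equiv 1$ near $\alpha_2$, $\chi_1+\chi_2\equiv 1$ on the whole interval, and with $\chi_1$ supported in a neighbourhood $U_1$ of $\alpha_1$ and $\chi_2$ in a neighbourhood $U_2$ of $\alpha_2$, where $U_1,U_2$ are the neighbourhoods furnished by Theorem~\ref{th:regularity} applied at $\alpha_0=\alpha_1$ and $\alpha_0=\alpha_2$ respectively (shrinking them if necessary so that $U_1\cap U_2\cap[\alpha_1,\alpha_2]$ contains no point of $\mathcal{S}_\Lambda$, which is possible since $(\alpha_1,\alpha_2)\cap\mathcal{S}_\Lambda=\emptyset$).

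On $U_1\cap\Wast$ Theorem~\ref{th:regularity} gives $w(\alpha,\cdot)=w_1^{(1)}(\alpha,\cdot)+\sqrt{\alpha-\alpha_1}\,w_2^{(1)}(\alpha,\cdot)$ with $w_1^{(1)},w_2^{(1)}$ analytic in $U_1\cap\Wast$; similarly on $U_2\cap\Wast$ one has $w(\alpha,\cdot)=w_1^{(2)}(\alpha,\cdot)+\sqrt{\alpha-\alpha_2}\,w_2^{(2)}(\alpha,\cdot)$. Then I would simply set
\begin{equation*}
v_1(\alpha,\cdot)=\chi_1(\alpha)\,w_1^{(1)}(\alpha,\cdot)+\chi_2(\alpha)\,w_1^{(2)}(\alpha,\cdot),\quad v_2(\alpha,\cdot)=\chi_1(\alpha)\,w_2^{(1)}(\alpha,\cdot),\quad v_3(\alpha,\cdot)=\chi_2(\alpha)\,w_2^{(2)}(\alpha,\cdot),
\end{equation*}
each extended by zero outside the support of the corresponding cutoff. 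Because $\chi_1+\chi_2\equiv 1$, summing the two local decompositions weighted by $\chi_1,\chi_2$ reproduces $w(\alpha,\cdot)$ exactly on $[\alpha_1,\alpha_2]$, so $w(\alpha,\cdot)=v_1(\alpha,\cdot)+\sqrt{\alpha-\alpha_1}\,v_2(\alpha,\cdot)+\sqrt{\alpha-\alpha_2}\,v_3(\alpha,\cdot)$ as claimed. Smoothness of $v_1,v_2,v_3$ on all of $[\alpha_1,\alpha_2]$ follows because each is a product of a $C^\infty$ cutoff with a function analytic on a neighbourhood of the support of that cutoff: on the overlap region the analytic pieces are honestly smooth (no branch point there, since $\mathcal{S}_\Lambda$ meets $[\alpha_1,\alpha_2]$ only at the endpoints), and near an endpoint only the cutoff that equals $1$ there contributes, so no spurious singularity is introduced; near the other endpoint the corresponding cutoff vanishes identically.

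The main point requiring care — really the only obstacle — is the bookkeeping at the overlap: I must check that $v_1$ is genuinely single-valued and smooth where both local formulas are active, which amounts to noting that on $U_1\cap U_2\cap[\alpha_1,\alpha_2]$ we have $\sqrt{\alpha-\alpha_1}$ and $\sqrt{\alpha-\alpha_2}$ both smooth (nonzero argument), so the identity $w_1^{(1)}+\sqrt{\alpha-\alpha_1}\,w_2^{(1)}=w_1^{(2)}+\sqrt{\alpha-\alpha_2}\,w_2^{(2)}$ is an identity between smooth functions and no matching of branch cuts across $\alpha_1$ or $\alpha_2$ is needed. I would also remark that the decomposition is of course non-unique (one may shuffle smooth multiples of $\sqrt{\alpha-\alpha_1}$ into $v_1$ near $\alpha_2$, etc.), but existence is all that is asserted. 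Modulo fixing the branch of the square root consistently with Theorem~\ref{th:regularity} — e.g. $\sqrt{\,\cdot\,}$ with a cut along the negative reals, so that $\sqrt{\alpha-\alpha_1}$ is real for $\alpha>\alpha_1$ and $\sqrt{\alpha-\alpha_2}=\i\sqrt{\alpha_2-\alpha}$ for $\alpha<\alpha_2$ — the argument is then complete.
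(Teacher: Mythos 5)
The paper itself offers no proof of this corollary; it is presented as an immediate repackaging of Theorem~\ref{th:regularity}, and your gluing argument is exactly the natural way to make that precise, so in spirit you are doing what the author intends. There is, however, one concrete gap in your setup: you ask for cutoffs $\chi_1,\chi_2$ with $\chi_1+\chi_2\equiv 1$ on all of $[\alpha_1,\alpha_2]$ while $\mathrm{supp}\,\chi_i\subset U_i$, where $U_1,U_2$ are the neighbourhoods \emph{furnished by the theorem}. Such a partition of unity exists only if $U_1\cup U_2\supseteq[\alpha_1,\alpha_2]$, and nothing in Theorem~\ref{th:regularity} guarantees that these (possibly very small) neighbourhoods of the endpoints cover the whole interval; your later remark about ``the overlap region'' presupposes that $U_1\cap U_2\neq\emptyset$, which likewise need not hold. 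The fix is easy and uses the part of the theorem you have not yet exploited: on the compact set $[\alpha_1+\epsilon,\alpha_2-\epsilon]$ the solution $w(\alpha,\cdot)$ is itself analytic in $\alpha$ (since $(\alpha_1,\alpha_2)\cap\mathcal{S}_\Lambda=\emptyset$), so take a three-term partition of unity $\chi_1+\chi_0+\chi_2\equiv1$ with $\chi_0$ supported in the interior, and set $v_1=\chi_1 w_1^{(1)}+\chi_0 w+\chi_2 w_1^{(2)}$, keeping $v_2=\chi_1 w_2^{(1)}$ and $v_3=\chi_2 w_2^{(2)}$ as you have them. With that amendment, together with your (correct) observations about fixing the branch of the square roots and about smoothness of each summand on its support, the argument is complete; the non-uniqueness remark is fine but not needed.
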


\subsection{High order finite element method}

Based on the regularity results for the solution $w(\alpha,\cdot)$, we can employ the high order numerical method in \cite{Zhang2017e} to solve the problems in this paper. 

For simplicity, let's redefine the set $\Wast$. As all the points $\alpha\in\R$ such that there is an $n\in\Z$, $|\alpha+n\Lambda^*|=k$ lie $\Lambda^*$-periodically,  we can simply redefine $\Wast$ by
\begin{equation*}
\Wast:=(a_0,a_0+\Lambda^*],\quad\text{ where }a_0 \text{ is a point in  } \mathcal{S}_\Lambda.
\end{equation*}
Then we can also redefine $\mathcal{S}_\Lambda$ by the new $\Wast$ in the same way. There are two types of $\mathcal{S}_\Lambda$ depending on $k$, i.e., let $\underline{k}=\min\left\{|n\Lambda^*-k|:\,n\in\Z\right\}$ thus $\underline{k}\leq\Lambda^*/2$, then
\begin{enumerate}
\item if $\underline{k}=\frac{m\Lambda^*}{2}$ where $m=0,1$, then $\mathcal{S}_\Lambda=\left\{-\underline{k},\Lambda^*-\underline{k}\right\}$;
\item if $\underline{k}\neq\frac{m\Lambda^*}{2}$, then $\mathcal{S}_\Lambda=\left\{-\underline{k},\underline{k},\Lambda^*-\underline{k}\right\}$.
\end{enumerate}
For the high order method, we will change the variable $\alpha$. For the first case, let $\mathcal{S}_\Lambda=\{a_0,a_1\}$ and for the second case, let $\mathcal{S}_\Lambda=\{a_0,a_1,a_2\}$, then for each interval $[a_j,a_{j+1}]$ ($j=0$ for the first case, and $j=0,1$ for the second case), let $\alpha=g(t)$ such that $g$ satisfies the following properties
\begin{itemize}
\item $g$ is a monotonic increasing function in $[a_j,a_{j+1}]$;
\item $g(a_j)=a_j$, $g(a_{j+1})=a_{j+1}$, $g|_{[a_j,a_{j+1}]}\in C^\infty([a_j,a_{j+1}])$;
\item there is a $\epsilon>0$ such that
\begin{eqnarray*}
&& g(t)-a_j=C \delta^2(t-a_j),\quad t\in[a_j,a_j+\epsilon);\\
&& a_{j+1}-g(t)=C \delta^2(a_{j+1}-t),\quad t\in(a_{j+1}-\epsilon,a_{j+1}],
\end{eqnarray*}
where $\delta\in C^\infty(-2\epsilon,2\epsilon)$ such that either $\delta(t)=O(t^{m+1})$ for some $m\in\N$, or $\delta(t)=o(t^n)$ for any $n\in\N$ as $t\rightarrow 0^+$.
\end{itemize}

\begin{figure}[htb]
\centering
\begin{tabular}{c c}
\includegraphics[width=0.47\textwidth]{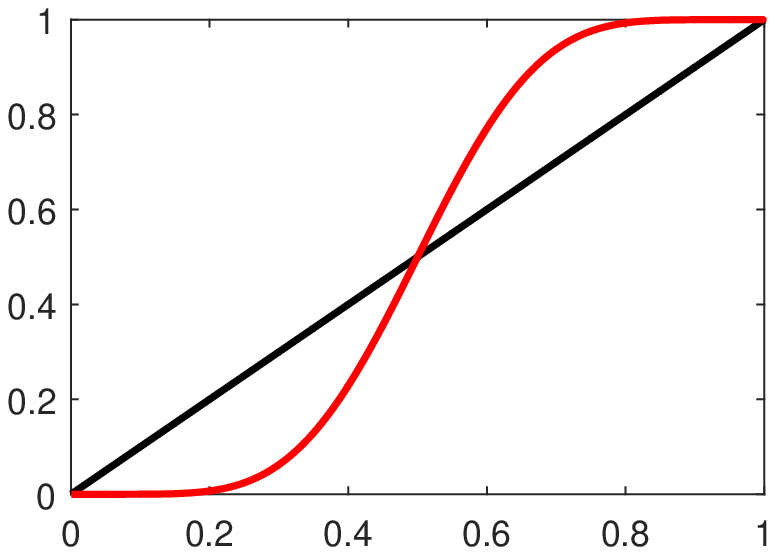} 
& \includegraphics[width=0.47\textwidth]{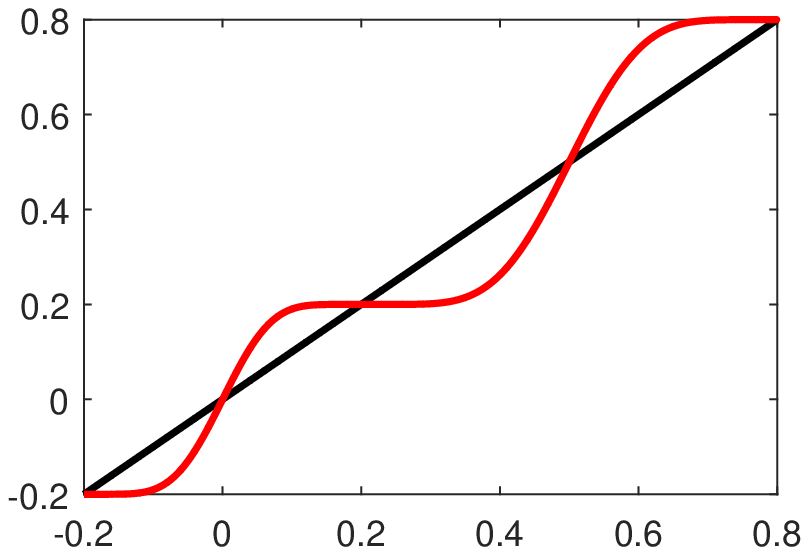}\\[-0cm]
(a) & (b)  
\end{tabular}%
\caption{(a)-(b): The two cases of of the locations of singularities and contours. (a) is the picture for $k=1$ and (b) is for $k=1.2$.}
\label{fig:contour}
\end{figure}

Let $v(t,\cdot):=w(g(t),\cdot)g'(t)$ and $\psi(t,\cdot):=\phi(g(t),\cdot)$, plunging the definitions in \eqref{eq:var_Bloch}, $v$ satisfies the following variational problem
\begin{equation}\label{eq:var_new}
\int_\Wast a_{g(t)}(v(t,\cdot),\psi(t,\cdot))\d t-F\left(\widetilde{\J}^{-1}_\Omega v,\widetilde{\J}^{-1}_\Omega[\psi(t,\cdot)g'(t)]\right)=F\left(u,\widetilde{\J}^{-1}_\Omega[\psi(t,\cdot)g'(t)]\right),
\end{equation}
where the new inverse Bloch transform $\widetilde{\J}^{-1}_\Omega$ is defined from the change of variables
\begin{equation*}
\begin{aligned}
(\J_\Omega^{-1} w)\left(x+\left(\begin{matrix}
\Lambda j\\0
\end{matrix}
\right)
\right)&=\left[\frac{\Lambda}{2\pi}\right]^{1/2}
\int_\Wast w(\alpha,x)e^{\i\alpha\Lambda j}\d \alpha\\
&=\left[\frac{\Lambda}{2\pi}\right]^{1/2}
\int_\Wast w(g(t),x)e^{\i g(t)\Lambda j}g'(t)\d t\\
&=\left[\frac{\Lambda}{2\pi}\right]^{1/2}
\int_\Wast v(t,x)e^{\i g(t)\Lambda j}\d t\\
&:=\left(\widetilde{\J}_\Omega^{-1} v\right)\left(x+\left(\begin{matrix}
\Lambda j\\0
\end{matrix}
\right)
\right)
\end{aligned}
\end{equation*}
However, from the definition of $F$ is on $\Omega^\Lambda_H$, which is the periodic cell where $j=0$, $(\J_\Omega^{-1}w)(x)=(\widetilde{\J}_\Omega^{-1}v)(x)$ for any $x\in\Omega^\Lambda$, and the variational problem is equivalent to 
\begin{equation}\label{eq:var_new1}
\int_\Wast a_{g(t)}(v(t,\cdot),\psi(t,\cdot))\d t-F\left({\J}^{-1}_\Omega v,{\J}^{-1}_\Omega[\psi(t,\cdot)g'(t)]\right)=F\left(u,{\J}^{-1}_\Omega[\psi(t,\cdot)g'(t)]\right),
\end{equation}

With the same finite element mesh and basic functions $\left\{\phi_M^{(\ell)}\right\}_{\ell=1}^M$ and $\left\{\psi^{(j)}_N\right\}_{j=1}^N$, we can define the new finite element space by
\begin{equation*}
 \widetilde{X}_{N,h}=\left\{v_{N,h}(t,x)=e^{-\i g(t) x_1}\sum_{j=1}^N\sum_{\ell=1}^M v^{(j,\ell)}_{N,h}\psi_N^{(j)}(t)\psi^{(\ell)}_M(x):\,v^{(j,\ell)}_{N,h}\in\C\right\},
\end{equation*}
thus the problem could be solved by Galerkin method. The high order numerical method could be concluded in the following algorithm.
\begin{algorithm}\label{alg:high_order}
High order numerical method for the scattering problems.
\begin{enumerate}
\item Find $u_h\in{V}^\alpha_h$ that solves the varaitional problem \eqref{eq:fem_var_per}.
\item Find $v_{N,h}\in \widetilde{X}_{N,h}$ that satisfies the following variational problem for all $\psi\in X ^0_{N,h}$
\begin{equation}
\int_\Wast a_{g(t)}(v_{N,h},\psi_{N,h})\d t-F\left({\J}^{-1}_{\Omega} v,{\J}^{-1}_{\Omega}[\psi_{N,h}g'(t)]\right)=F\left(u_h,{\J}^{-1}_{\Omega}[\psi_{N,h}g'(t)]\right),
\end{equation}
\item Let $v_{N,h}=\widetilde{\J}^{-1}_{\Omega}v_{N,h}$ and then approximate $u_T$ by $u_{T}^{N,h}=u_h+v_h$.
\end{enumerate}
\end{algorithm}

 From the arguments in previous sections and \cite{Zhang2017e}, the convergence result and error estimate could be concluded in the following theorem.

\begin{theorem}
Assume $\zeta_p,\zeta\in C^{2,1}(\R)$, then the linear system \eqref{eq:var_new} is uniquely solvable in $ X ^0_{N,h}$ for large enough $N$ and small enough $h>0$. Let $v_h$ be the exact solution of the variational problem for any $\psi\in C_0^\infty(\Wast\times\Omega^\Lambda_H)$
\begin{equation}
\int_\Wast a_{g(t)}(v_{h},\psi)\d t-F\left({\J}^{-1}_{\Omega} v_h,{\J}^{-1}_{\Omega}[\psi(t,\cdot) g'(t)]\right)=F\left(u_h,{\J}^{-1}_{\Omega}[\psi(t,\cdot)g'(t)]\right),
\end{equation}
then \\
\noindent
1) If $\delta(t)=O(t^{m+1})$ for some positive integer $m$, then the solution $v_{N,h}\in  X _{N,h}$ satisfies the error estimate between $v_{N,h}$ and $v_h$ or $v$
\begin{eqnarray*}
&&\|v_{N,h}-v_h\|_{L^2(\Wast;\widetilde{H}^\ell (\Omega^\Lambda_H))}\leq C h^{1-\ell}(N^{-2m+1/2}+h);\\
&&\|v_{N,h}-v\|_{L^2(\Wast;\widetilde{H}^1 (\Omega^\Lambda_H))}\leq C(N^{-2m+1/2}+h).
\end{eqnarray*}

\noindent
2) If $\delta(t)=o(t^{n+1})$ for any positive integer $n$, then the solution has the following error estimate
\begin{eqnarray*}
&&\|v_{N,h}-v_h\|_{L^2(\Wast;\widetilde{H}^\ell (\Omega^\Lambda_H))}\leq C h^{1-\ell}(N^{-2n+1/2}+h);\\
&&\|v_{N,h}-v\|_{L^2(\Wast;\widetilde{H}^1 (\Omega^\Lambda_H))}\leq C(N^{-2n+1/2}+h).
\end{eqnarray*}
\end{theorem}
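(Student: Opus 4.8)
The plan is to reduce the final error estimate to a composition of three pieces, following the strategy already used in Section 6 for Algorithm \ref{alg}: the error coming from Step 1 (the quasi-periodic finite element approximation $u_h\approx u$), the error coming from the change of variable $\alpha=g(t)$ and the resulting $\alpha$-regularity of $v(t,\cdot)$, and the error coming from the discretization of \eqref{eq:var_new1} in $\widetilde X_{N,h}$. First I would invoke Theorem \ref{th:fem_quasi_per} to get $\|u-u_h\|_{H^1(\Omega^\Lambda_H)}\le Ch\|u^i\|_{H^{3/2}_\alpha(\Gamma^\Lambda_H)}$, together with the uniform bound $\|u_h\|_{H^1_\alpha(\Omega^\Lambda_H)}\le C\|u^i\|_{H^{3/2}_\alpha(\Gamma^\Lambda_H)}$ for $h$ small. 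Then, as in \eqref{eq:var3}--\eqref{eq:var5}, I would introduce the intermediate exact solution $v_h$ with right-hand side $F(u_h,\cdot)$ and split
\begin{equation*}
v_{N,h}-v = (v_{N,h}-v_h) + (v_h-v),
\end{equation*}
estimating $\|v_h-v\|_{L^2(\Wast;\widetilde H^1_\alpha(\Omega^\Lambda_H))}\le C\|u-u_h\|_{H^1(\Omega^\Lambda_H)}\le Ch\|u^i\|_{H^{3/2}_\alpha(\Gamma^\Lambda_H)}$ by well-posedness of the variational form (the analogue of Theorem \ref{th:unique} after the change of variable, which is a bounded perturbation of \eqref{eq:var_Bloch}).

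The core of the argument is the bound on $v_{N,h}-v_h$, and here I would quote the machinery of \cite{Zhang2017e}: by Theorem \ref{th:smooth} and Theorem \ref{th:regularity} (or its Corollary), $w(\alpha,\cdot)$ has exactly square-root singularities at the finitely many points of $\mathcal{S}_\Lambda$, so after the change of variable $\alpha=g(t)$ with the prescribed flattening behaviour $g(t)-a_j=C\delta^2(t-a_j)$, the function $v(t,\cdot)=w(g(t),\cdot)g'(t)$ gains regularity in $t$: if $\delta(t)=O(t^{m+1})$ then $v(\cdot,\cdot)$ lies in $H^{2m}$-type smoothness in $t$ (with values in $\widetilde H^2_\alpha(\Omega^\Lambda_H)$ thanks to Theorem \ref{th:smooth}), and if $\delta$ vanishes faster than any polynomial the smoothness in $t$ is of order $2n$ for every $n$. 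Feeding this regularity into the approximation properties of the piecewise-constant-in-$t$, piecewise-linear-in-$x$ space $\widetilde X_{N,h}$ and into Céa's lemma for the discrete form (using that $\mathcal{A}_{g(t)}$ satisfies a uniform discrete inf-sup condition for $N$ large, $h$ small — again from \cite{Zhang2017e}), one gets
\begin{equation*}
\|v_{N,h}-v_h\|_{L^2(\Wast;\widetilde H^\ell(\Omega^\Lambda_H))}\le Ch^{1-\ell}\bigl(N^{-2m+1/2}+h\bigr),
\end{equation*}
and the analogous bound with exponent $2n$ in the $o(t^{n+1})$ case; the power $-2m+1/2$ is exactly the best $L^2$-in-$t$ approximation rate for a function that is $H^{2m}$ after flattening and piecewise-constant-interpolated, with the extra $+h$ absorbing the spatial discretization of the $\widetilde H^2_\alpha$-regular profile. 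The triangle inequality then combines the $O(h)$ term from $v_h-v$ with $O(N^{-2m+1/2}+h)$, and since $h\le N^{-2m+1/2}+h$ the final $\widetilde H^1$ bound $\|v_{N,h}-v\|_{L^2(\Wast;\widetilde H^1(\Omega^\Lambda_H))}\le C(N^{-2m+1/2}+h)$ follows; the $\widetilde H^\ell$ bound for $v_{N,h}-v_h$ is kept separate because $v_h-v$ is only an $H^1$-quantity (the same $H^1$-versus-$H^2$ gap flagged in the Remark after Theorem 16).

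The main obstacle I anticipate is the rigorous transfer of the $\alpha$-regularity of $w$ through the change of variable and the verification that the discrete problem \eqref{eq:var_new1} inherits a uniform (in $N$, $h$) inf-sup/Gårding structure; both are genuinely the content of \cite{Zhang2017e}, and the honest thing is to state that the proof runs parallel to that reference, checking only that the right-hand side $F(u_h,\cdot)$ (rather than $F(u,\cdot)$) causes no new difficulty — which it does not, since $F$ is a bounded sesquilinear form on $\Omega^\Lambda_H$ by \eqref{eq:rhs} and $u_h$ is uniformly bounded. A secondary technical point is keeping track of the factor $g'(t)$ that appears both in the definition of $v$ and in the test functions $\psi(t,\cdot)g'(t)$: since $g\in C^\infty$ with $g'$ bounded above and below away from zero on each subinterval (monotone increasing, smooth), multiplication by $g'$ is an isomorphism on the relevant spaces and does not affect the rates. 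Finally, one should note that the statement as written says $\delta(t)=o(t^{n+1})$ where the hypothesis list in the construction says $\delta(t)=o(t^n)$; I would simply use whichever vanishing order is assumed and carry the corresponding exponent, as the argument is insensitive to this cosmetic shift.
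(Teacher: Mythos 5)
Your proposal is correct and follows essentially the route the paper intends: the paper itself gives no proof of this theorem beyond the remark that it ``could be concluded'' from the Section~6 error analysis (the splitting via the intermediate exact solution with right-hand side $F(u_h,\cdot)$, Theorems \ref{th:fem_quasi_per}, \ref{th:err1}--\ref{th:err3}) together with the regularity/change-of-variable machinery of \cite{Zhang2017e}, which is exactly the decomposition you carry out. One small internal slip worth fixing: $g'$ is \emph{not} bounded below away from zero on each subinterval --- it vanishes at the endpoints $a_j$ by construction (that is precisely what cancels the square-root singularity of $w$), so the ``multiplication by $g'$ is an isomorphism'' remark should be replaced by the observation that the vanishing of $g'$ is what yields the improved $t$-regularity of $v(t,\cdot)=w(g(t),\cdot)g'(t)$, which your rate analysis already uses correctly.
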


\section{Further discussion for more generalized incident fields}

Although standard finite element method is available for $u^i\in H_r^1(\Omega_H)$ for any $r>0$, high order numerical method is only available when $u^i\in H_r^1(\Omega_H)$ for $r\in(1/2,1)$ (see \cite{Zhang2017e}).  In fact, the high order method could be extended to more generalized incident fields with the help of the technique introduced in this paper. The following assumption is necessary for the method.
\begin{assumption}
The incident field satisfies that, the scattering problems from periodic surfaces could be solved by known numerical methods. The following three cases of incident fields satisfy this condition.
\begin{enumerate}
\item Finite combination of quasi-periodic fields, e.g., finite combination of plain waves.
\item When $u^i\in H_r^1(\Omega^\Lambda_H)$ for any $r>0$.
\end{enumerate}
\end{assumption}

In this section, we only talk about the second case, for the first case is almost the same as that in previous sections. From \cite{Lechl2017}, when $r>0$, the problem could be solved numerically, by solving a large linear system of the size $M(N+1)\times M(N+1)$ has the following form (for the discretization, see \cite{Lechl2017})
\begin{equation}\label{eq:sys1}
\left(\begin{matrix}
A_1 & 0 & \dots & 0 & C_1\\
0 & A_2 & \dots & 0 & C_2\\
\vdots & \vdots & \vdots & \vdots &\vdots\\
0 & 0 & \dots & A_N & C_N\\
B_1 & B_2 & \dots & B_N &I
\end{matrix}
\right)\left(
\begin{matrix}
W_1\\W_2\\ \vdots \\W_N\\U
\end{matrix}
\right)=\left(
\begin{matrix}
F_1 \\ F_2 \\ \vdots \\ F_N\\ 0
\end{matrix}
\right).
\end{equation}
 The convergence rate with respect to $L^2$-norm is $h(N^{-r}+h)$. Which means that when $r$ is relatively small, a large $N$ is required for a small enough error. However, when the surface is not perturbed, the large system is decoupled into $N$ sub-systems of the size $M\times M$, which is much more easier for numerical schemes:
\begin{equation}\label{eq:sys2}
A_j W_j=F_j,\quad j=1,2,\dots,N.
\end{equation}
Then with the numerical solution $u_h$, we can apply either the standard finite element method, i.e., Algorithm \ref{alg}, or the high order numerical method, i.e., Algorithm \ref{alg:high_order} to solve the problem. 
 The convergence result and error estimation could be deduced in the same way, with the known result in \cite{Lechl2016a} and that in previous sections, and will not be discussed again.

\section{Numerical Results}

In this section, we will show some numerical results for both the standard Algorithm \ref{alg} and the high order Algorithm \ref{alg:high_order}. In each example, the incident field is a downward propagating plane wave defined by the parameter $k>0$ and $\alpha\in(-k,k)$:
\begin{equation*}
u^i_{k,\alpha}(x)=\exp\left(\i\alpha x_1-\i\sqrt{k^2-\alpha^2}x_2\right).
\end{equation*}
There are two $2\pi$-periodic (i.e., $\Lambda=2\pi$ and $\Lambda^*=1$) surfaces defined by
\begin{eqnarray*}
&& f_1(x_1)=1.9+\frac{\sin x_1}{3}-\frac{\cos 2x}{4};\\
&& f_2(x_1)=2-\frac{\cos x_1}{4}.
\end{eqnarray*}
And two local perturbations, defined by
\begin{eqnarray*}
&& p_1(x_1)= \exp\left(\frac{1}{x_1^2-1}\right)\sin\left[\pi(x_1+1)\right]\cdot\mathcal{X}_{(-1,1)}(x_1);\\
&& p_2(x_1)=\frac{1+\cos x}{4}\cdot\mathcal{X}_{(-\pi,\pi)}(x_1),
\end{eqnarray*}
where $\mathcal{X}_{(a,b)}$ is a smooth cut off function that equals to zero outside $(a,b)$. Then the two locally perturbed periodic functions are defined by $\zeta_j=f_j+p_j$, thus the two surfaces $\Gamma_1$ and $\Gamma_2$ are defined by
\begin{equation*}
\Gamma_j:=\left\{(x_1,\zeta_j(x_1)):\,x_1\in\R\right\}.
\end{equation*}

\begin{figure}[htb]
\centering
\begin{tabular}{c c}
\includegraphics[width=0.47\textwidth]{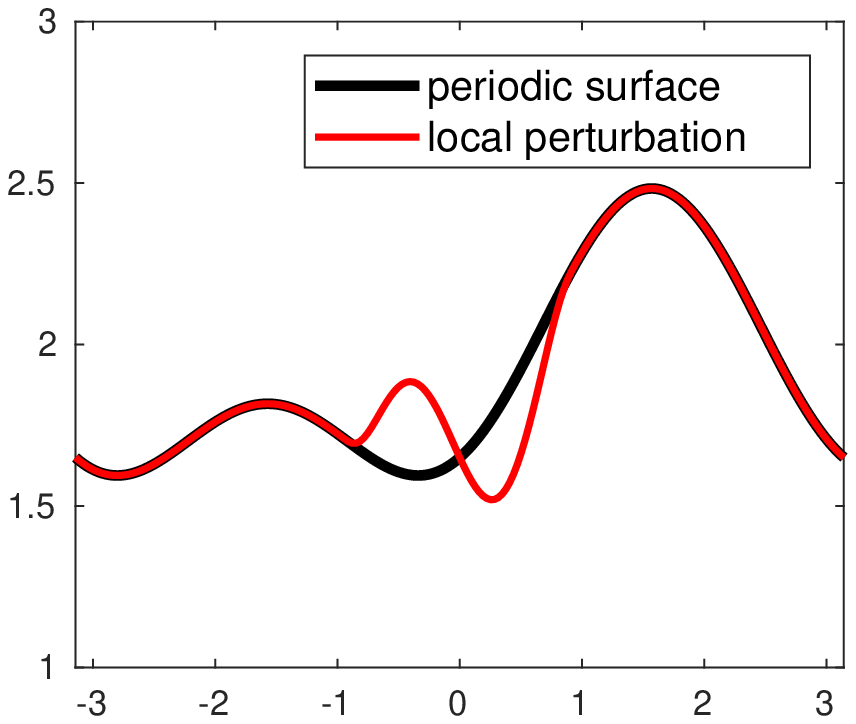} 
& \includegraphics[width=0.47\textwidth]{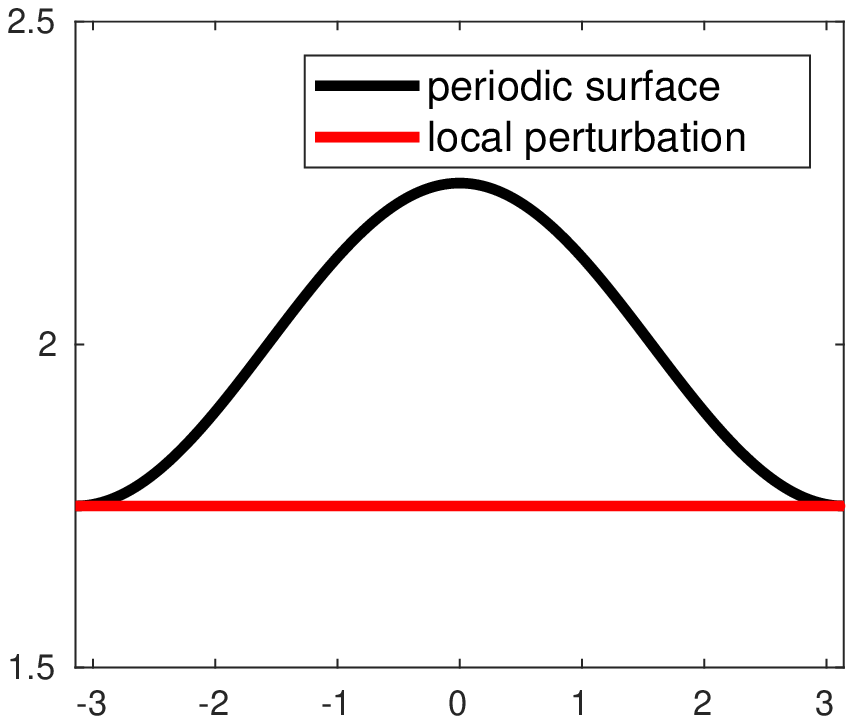}\\[-0cm]
(a) & (b)  
\end{tabular}%
\caption{Locally perturbed surfaces; (a): $\Gamma_1$, (b): $\Gamma_2$.}
\label{fig:surface}
\end{figure}

For each example, we just check the convergence rate of the second step, for the error estimation for the first step is very clear. The height of the straight line $\Gamma_H$ is always set to be $H=4$, and the parameter $H_0=3.9$. The data are collected one the line segment $\Gamma_H^\Lambda=(-\pi,\pi)\times\{4\}$.

\subsection{Standard method}

In this subsection, we will show four examples for the standard method. For numerical implementation see \cite{Lechl2017}. For each example, we set $h=0.16,0.08,0.04,0.02$ and $N=20,40,80,160,320$. The numerical solution when $h=0.02$ and $N=320$ is set as the "exact solution", then the $L^2$-relative error is defined by
\begin{equation*}
err_{N,h}\approx\frac{\|u_{N,h}-u_{320,0.02}\|_{L^2(\Gamma_H^\Lambda)}}{\|u_{320,0.02}\|_{L^2(\Gamma^\Lambda_H)}}.
\end{equation*}
 In Table \ref{surf1k1}-\ref{surf4k4}, the relative error between the numerical solution and the "exact solution" is shown. 

\begin{example}Examples for Algorithm \ref{alg}.
\begin{enumerate}
\item $k=1$, $\alpha=0.3$, $u^i$ is scattered by surface $\Gamma_1$;
\item $k=\sqrt{10}$, $\alpha=0.5$, $u^i$ is scattered by surface $\Gamma_1$;
\item $k=5$, $\alpha=-0.5$, $u^i$ is scattered by surface $\Gamma_2$;
\item $k=10$, $\alpha=\sqrt{2}$, $u^i$ is scattered by surface $\Gamma_2$.
\end{enumerate}
\end{example}

\begin{table}[htb]
\centering
\caption{Relative $L^2$-errors for Example 1 (surface $\Gamma_1$, $k=1$, $\alpha=0.3$).}\label{surf1k1}
\begin{tabular}
{|p{1.8cm}<{\centering}||p{2cm}<{\centering}|p{2cm}<{\centering}
 |p{2cm}<{\centering}|p{2cm}<{\centering}|p{2cm}<{\centering}|}
\hline
  & $h=0.16$ & $h=0.08$ & $h=0.04$ & $h=0.02$\\
\hline
\hline
$N=20$&$4.36$E$-03$&$3.64$E$-03$&$3.48$E$-03$&$3.45$E$-03$\\
\hline
$N=40$&$2.32$E$-03$&$1.38$E$-03$&$1.20$E$-03$&$1.16$E$-03$\\
\hline
$N=80$&$1.76$E$-03$&$6.44$E$-04$&$4.12$E$-04$&$3.73$E$-04$\\
\hline
$N=160$&$1.61$E$-03$&$4.41$E$-04$&$1.49$E$-04$&$9.67$E$-05$\\
\hline
$N=320$&$1.47$E$-03$&$3.93$E$-04$&$8.01$E$-05$&$--$\\
\hline
\end{tabular}
\end{table}

\begin{table}[htb]
\centering
\caption{Relative $L^2$-errors for Example 2 (surface $\Gamma_1$, $k=\sqrt{10}$, $\alpha=0.5$).}\label{surf2k2}
\begin{tabular}
{|p{1.8cm}<{\centering}||p{2cm}<{\centering}|p{2cm}<{\centering}
 |p{2cm}<{\centering}|p{2cm}<{\centering}|p{2cm}<{\centering}|}
\hline
  & $h=0.16$ & $h=0.08$ & $h=0.04$ & $h=0.02$\\
\hline
\hline
$N=20$&$8.39$E$-02$&$2.15$E$-02$&$6.36$E$-03$&$4.33$E$-03$\\
\hline
$N=40$&$8.33$E$-02$&$2.08$E$-02$&$4.67$E$-03$&$1.33$E$-03$\\
\hline
$N=80$&$8.32$E$-02$&$2.08$E$-02$&$4.68$E$-03$&$9.92$E$-04$\\
\hline
$N=160$&$8.31$E$-02$&$2.07$E$-02$&$4.54$E$-03$&$2.41$E$-04$\\
\hline
$N=320$&$8.31$E$-02$&$2.07$E$-02$&$4.52$E$-03$&$--$\\
\hline
\end{tabular}
\end{table}

\begin{table}[htb]
\centering
\caption{Relative $L^2$-errors for Example 3 (surface $\Gamma_2$, $k=5$, $\alpha=-0.5$).}\label{surf3k3}
\begin{tabular}
{|p{1.8cm}<{\centering}||p{2cm}<{\centering}|p{2cm}<{\centering}
 |p{2cm}<{\centering}|p{2cm}<{\centering}|p{2cm}<{\centering}|}
\hline
  & $h=0.16$ & $h=0.08$ & $h=0.04$ & $h=0.02$\\
\hline
\hline
$N=20$&$1.83$E$-01$&$4.74$E$-02$&$9.96$E$-03$&$1.30$E$-03$\\
\hline
$N=40$&$1.82$E$-01$&$4.71$E$-02$&$9.61$E$-03$&$4.01$E$-03$\\
\hline
$N=80$&$1.82$E$-01$&$4.70$E$-02$&$9.53$E$-03$&$1.23$E$-04$\\
\hline
$N=160$&$1.82$E$-01$&$4.70$E$-02$&$9.51$E$-03$&$3.12$E$-05$\\
\hline
$N=320$&$1.82$E$-01$&$4.70$E$-02$&$9.51$E$-03$&$--$\\
\hline
\end{tabular}
\end{table}

\begin{table}[htb]
\centering
\caption{Relative $L^2$-errors for Example 4 (surface $\Gamma_2$, $k=10$, $\alpha=\sqrt{2}$).}\label{surf4k4}
\begin{tabular}
{|p{1.8cm}<{\centering}||p{2cm}<{\centering}|p{2cm}<{\centering}
 |p{2cm}<{\centering}|p{2cm}<{\centering}|p{2cm}<{\centering}|}
\hline
  & $h=0.16$ & $h=0.08$ & $h=0.04$ & $h=0.02$\\
\hline
\hline
$N=20$&$8.68$E$-01$&$3.27$E$-01$&$7.19$E$-02$&$6.46$E$-04$\\
\hline
$N=40$&$8.68$E$-01$&$3.27$E$-01$&$7.17$E$-02$&$1.60$E$-04$\\
\hline
$N=80$&$8.68$E$-01$&$3.27$E$-01$&$7.16$E$-02$&$3.84$E$-05$\\
\hline
$N=160$&$8.68$E$-01$&$3.27$E$-01$&$7.16$E$-02$&$1.37$E$-05$\\
\hline
$N=320$&$8.68$E$-01$&$3.27$E$-01$&$7.16$E$-02$&$--$\\
\hline
\end{tabular}
\end{table}

In Table \ref{surf1k1}-\ref{surf4k4}, the error decreases as the $N$ increases or $h$ decreases.  When $k=1$, the error caused by $N$ is the dominant one, thus the decrease of error with respect to $h$ is comparatively small, see Table \ref{surf1k1}, while when $k=5,10$, the cases are the opposite, see Table \ref{surf2k2}-\ref{surf3k3}. When $k=\sqrt{10}$, we can see that the influence from $h$ and $N$ are mixed. When $h$ is small enough (see the results when $h=0.02$), the convergence rate with respect to $N$ is faster than $O(N^r)$ for any $r<1$ as expected. While when $N$ is large enough (see the results when $N=320$), the convergence rate with respect to $h$ is about $h^{-2}$. The numerical results present even better convergence result than expected.

\subsection{High order}

In this subsection, we will show four examples for the high order method. For numerical implementation see \cite{Zhang2017e}. For each example, we fix $h=0.03$ and set $N=4,8,16,32,64,128$, as only the convergence rate with respect to $N$ is interesting in this section. The numerical solution when $N=128$ is set as the "exact solution", then the $L^2$-relative error is defined by
\begin{equation*}
err_{N}\approx\frac{\|u_{N}-u_{128}\|_{L^2(\Gamma_H^\Lambda)}}{\|u_{128}\|_{L^2(\Gamma^\Lambda_H)}}.
\end{equation*}
In Table \ref{index3}-\ref{indexinfty}, the relative error between the numerical solution and the "exact solution" is shown. We choose different functions $g$ in each table. In Table \ref{index3}, the function $g_1$ is defined by
\begin{equation*}
g_1(t)=A_0+\frac{A_1-A_0}{\int_{A_0}^{A_1}(s-A_0)^3(s-A_1)^3\d s}\left[\int_{A_0}^t(s-A_0)^3(s-A_1)^3\d s\right],
\end{equation*}
thus $\delta_1(t)=O(t^2)$ as $t\rightarrow 0^+$; and in Table \ref{indexinfty}, the function $g_2$ is defined by
\begin{equation*}
g_2(t)=A_0+\frac{A_1-A_0}{\int_{A_0}^{A_1}\exp\left(\frac{1}{(s-A_0)(s-A_1)}\right)\d s}\left[\int_{A_0}^t\exp\left(\frac{1}{(s-A_0)(s-A_1)}\right)\d s\right],
\end{equation*}
then $\delta_2(t)=o(t^n)$ for any $n\in\N$ as $t\rightarrow 0^+$.

\begin{example}Examples for Algorithm \ref{alg:high_order}.
\begin{enumerate}
\item $k=1$, $\alpha=0.3$, $u^i$ is scattered by surface $\Gamma_1$;
\item $k=\sqrt{10}$, $\alpha=0.5$, $u^i$ is scattered by surface $\Gamma_1$;
\item $k=\sqrt{5}$, $\alpha=-0.5$, $u^i$ is scattered by surface $\Gamma_2$;
\item $k=10$, $\alpha=\sqrt{2}$, $u^i$ is scattered by surface $\Gamma_2$.
\end{enumerate}
\end{example}

\begin{table}[htb]
\centering
\caption{Relative $L^2$-errors ($g_13$).}\label{index3}
\begin{tabular}
{|p{1.8cm}<{\centering}||p{2cm}<{\centering}|p{2cm}<{\centering}
 |p{2cm}<{\centering}|p{2cm}<{\centering}|p{2cm}<{\centering}|}
\hline
  & Example 1 & Example 2 & Example 3 & Example 4\\
\hline
\hline
$N=4$&$1.90$E$-02$&$8.35$E$-03$&$9.07$E$-03$&$1.70$E$-01$\\
\hline
$N=8$&$8.04$E$-04$&$5.02$E$-04$&$6.08$E$-04$&$4.21$E$-03$\\
\hline
$N=16$&$5.01$E$-05$&$3.18$E$-05$&$3.86$E$-05$&$7.11$E$-05$\\
\hline
$N=32$&$3.12$E$-06$&$1.99$E$-06$&$2.41$E$-06$&$4.28$E$-06$\\
\hline
$N=64$&$1.83$E$-07$&$1.17$E$-07$&$1.42$E$-07$&$2.52$E$-07$\\
\hline
\end{tabular}
\end{table}

\begin{table}[htb]
\centering
\caption{Relative $L^2$-errors ($g_2$).}\label{indexinfty}
\begin{tabular}
{|p{1.8cm}<{\centering}||p{2cm}<{\centering}|p{2cm}<{\centering}
 |p{2cm}<{\centering}|p{2cm}<{\centering}|p{2cm}<{\centering}|}
\hline
  & Example 1 & Example 2 & Example 3 & Example 4\\
\hline
\hline
$N=4$&$5.39$E$-02$&$1.92$E$-02$&$9.68$E$-03$&$3.23$E$-01$\\
\hline
$N=8$&$1.14$E$-04$&$9.33$E$-05$&$9.51$E$-05$&$9.12$E$-03$\\
\hline
$N=16$&$1.15$E$-05$&$7.21$E$-06$&$8.73$E$-06$&$1.36$E$-04$\\
\hline
$N=32$&$1.45$E$-08$&$9.24$E$-09$&$1.12$E$-08$&$2.79$E$-08$\\
\hline
$N=64$&$1.17$E$-12$&$6.32$E$-13$&$8.05$E$-13$&$1.43$E$-12$\\
\hline
\end{tabular}
\end{table}

In each column of Table \ref{index3}, the convergence rate is around $O(N^{-4})$, which is a little faster than expected, i.e., $O(N^{-2\times 2+1/2})=O(N^{-3.5})$. While for results in Table \ref{indexinfty}, the convergence rate is much more faster as $N$ becomes larger. The results in the two tables are good illustrations
for the error estimations. Compared to the standard method, the high order method improves the convergence rate significantly.

\section*{Appendix I: The Floquet-Bloch transform}

Recall the definition of the Bloch transform on the one dimensional space $\R$. We can define the Bloch transform $\J_{\R}$ on the space $\R$.  Suppose $\phi\in C_0^\infty(\R)$, then the one dimensional Bloch transform $\J_{\R}$ of $\phi$ is defined by
\begin{equation*}
\left(\J_{\R}\phi\right)({\alpha},x_1)=\left[\frac{\Lambda}{2\pi}\right]^{1/2}\sum_{{j}\in\Z}\phi(x_1+\Lambda{ j})e^{-\i{\alpha}\Lambda{ j}},\quad{\alpha}\in\R,\, x_1\in\R.
\end{equation*} 
The Bloch transform $\J_{\R}$ is now well defined from $C_0^\infty(\R)$ to $C^\infty(\Wast\times\W)$. For any fixed $x_1\in\W$, the transformed field is $\Lambda^*$-periodic in ${\alpha}$; for any fixed ${\alpha}\in\Wast$, it is an ${\alpha}$-quasi-periodic in $x_1$ with period $\Lambda$, i.e., for any ${ j}\in\Z$,
\begin{eqnarray*}
&&\J_{\R}\phi({\alpha}+\Lambda^*{ j},{x_1})=\J_{\R}\phi({\alpha},x_1),\quad { \alpha}\in\R;\\
&&\J_{\R}\phi({\alpha},x_1+\Lambda{j})=e^{\i{\alpha} \Lambda{ j} }\J_{\R}\phi({\alpha},x_1),\quad x_1\in\R.
\end{eqnarray*}

To introduce the properties of the Bloch transform, we need to define some function spaces before that. For $s,r\in\R$, define the weighted function space
\begin{equation*}
H_r^s(\R):=\left\{\phi\in \mathcal{D}'(\R):\, (1+|x_1|^2)^{r/2}\phi(x_1)\in H^s(\R)\right\}. 
\end{equation*}
For $\ell\in\N$, $s\in\R$, define the function space 
\begin{equation*}
H^\ell(\Wast;H^s(\W)):=\left\{\psi\in \mathcal{D}'(\Wast\times\W):\,\sum_{m=0}^\ell\int_\Wast \left\|\partial^m_{\alpha}\psi({\alpha},\cdot)\right\|^2_{H^s(\Wast)}\d{\alpha}<\infty\right\}.
\end{equation*}
Extend $\ell\in\N$ to any $r>0$ by interpolation, and then extend to $r\in\R$ by duality arguments, then the definition is defined for any $\ell\in\R$. Define the space $H^r_0(\Wast;H^s_\alpha(\W))$ by the restriction of $H^r(\Wast;H^s(\W))$ such that the functions in this space are $\Lambda^*$-periodic in ${\alpha}$, and ${\alpha}$-quasi-periodic in $x_1$ with period $\Lambda$. Then the Bloch transform is extended into a bounded linear operator on a larger space.

\begin{theorem}
The Bloch transform $\J_{\R}$ extends to an isometric isomorphism between $H_r^s(\R)$ and $H_0^r(\Wast;H_\alpha^s(\W))$ for any $s,r\in\R$. Its inverse has the form of
\begin{equation*}
(\J^{-1}_{\R}\psi)(x_1+\Lambda{ j})=C_\Lambda\int_\Wast \psi({\alpha},x_1)e^{\i{\alpha} \Lambda{ j}}\d{\alpha},\quad x_1\in\W,\,{ j}\in\Z.
\end{equation*}
When $r=s=0$, the adjoint operator $\J^*_{\R}$ equals to the inverse $\J^{-1}_{\R}$.
\end{theorem}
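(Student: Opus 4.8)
The plan is to establish the three claims in order: (i) $\J_{\R}$ is an isometry from $H_r^s(\R)$ onto $H_0^r(\Wast;H_\alpha^s(\W))$, (ii) the stated integral formula is its inverse, and (iii) $\J_{\R}^* = \J_{\R}^{-1}$ when $r=s=0$. First I would work at the level $r=s=0$, i.e. on $L^2$, and then bootstrap to general $s$ and $r$ by the interpolation/duality recipe already used to \emph{define} the target spaces. For the $L^2$ case, I start from $\phi\in C_0^\infty(\R)$ and compute the $L^2(\Wast\times\W)$-norm of $\J_{\R}\phi$ directly. Writing $\|\J_{\R}\phi\|^2 = \frac{\Lambda}{2\pi}\int_\W\int_\Wast \big|\sum_{j}\phi(x_1+\Lambda j)e^{-\i\alpha\Lambda j}\big|^2\,\d\alpha\,\d{x_1}$, the inner $\alpha$-integral over the period $\Wast$ of length $\Lambda^*=2\pi/\Lambda$ picks out, by orthogonality of $\{e^{-\i\alpha\Lambda j}\}_{j\in\Z}$ on $\Wast$, exactly $\frac{2\pi}{\Lambda}\sum_j |\phi(x_1+\Lambda j)|^2$; the prefactors cancel and integrating over $x_1\in\W$ reassembles $\sum_j\int_\W|\phi(x_1+\Lambda j)|^2\,\d{x_1} = \int_\R|\phi|^2 = \|\phi\|_{L^2(\R)}^2$. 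Hence $\J_{\R}$ is an isometry on a dense subspace and extends uniquely to an isometry $L^2(\R)\to L^2(\Wast\times\W)$; its range lies in $H_0^0(\Wast;H_\alpha^0(\W))$ because the $\Lambda^*$-periodicity in $\alpha$ and $\alpha$-quasi-periodicity in $x_1$ stated in the excerpt are preserved under $L^2$-limits.

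Next I would verify the inversion formula and, simultaneously, surjectivity. For $\psi$ of the form $\J_{\R}\phi$ with $\phi\in C_0^\infty(\R)$, plugging into the candidate formula $(\J_{\R}^{-1}\psi)(x_1+\Lambda j) = C_\Lambda\int_\Wast\psi(\alpha,x_1)e^{\i\alpha\Lambda j}\,\d\alpha$ and expanding $\psi(\alpha,x_1) = \big[\tfrac{\Lambda}{2\pi}\big]^{1/2}\sum_{m}\phi(x_1+\Lambda m)e^{-\i\alpha\Lambda m}$, the $\alpha$-integral over $\Wast$ again collapses by orthogonality to the single term $m=j$, returning $\phi(x_1+\Lambda j)$ provided $C_\Lambda = \big[\tfrac{\Lambda}{2\pi}\big]^{1/2}$; so the formula is a genuine left inverse on the dense set, and since $\J_{\R}$ is a (bounded) isometry this forces it to be unitary, i.e. surjective, with the formula extending to the claimed bounded inverse on all of $H_0^0(\Wast;H_\alpha^0(\W))$. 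The identity $\J_{\R}^* = \J_{\R}^{-1}$ at $r=s=0$ is then immediate: a surjective isometry between Hilbert spaces is unitary, so $\J_{\R}^*\J_{\R} = \mathrm{Id}$ and $\J_{\R}\J_{\R}^* = \mathrm{Id}$, whence $\J_{\R}^* = \J_{\R}^{-1}$.

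To promote these statements to general $s,r\in\R$, I would first handle $r=0$ and integer $s=\ell\in\N$ using that $\J_{\R}$ commutes with $\partial_{x_1}$ (differentiating the defining series term by term, legitimate on $C_0^\infty$) and that $\partial_{x_1}$ acting in the $x_1$-variable on the target side corresponds to differentiation on $H^\ell(\W)$; combined with the $L^2$ isometry this gives an isometry $H^\ell(\R)\to H^0(\Wast;H^\ell_\alpha(\W))$. Then general $s>0$ follows by interpolation, general $s<0$ by duality, exactly mirroring the way the spaces $H^s_r(\R)$ and $H^r(\Wast;H^s(\W))$ were themselves built up in the excerpt. Finally the weight index $r$: the weight $(1+|x_1|^2)^{r/2}$ on $\R$ transforms, under $\J_{\R}$, into a differential operation in $\alpha$ — intuitively multiplication by $x_1$ corresponds to $\i\partial_\alpha$ on the Bloch side — so that $r$ derivatives in $\alpha$ on the image side match $r$ powers of the weight on the physical side; making this precise for integer $r$ and then interpolating/dualizing yields the full isometry $H_r^s(\R)\to H_0^r(\Wast;H_\alpha^s(\W))$ with the same inverse formula.

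The main obstacle I expect is precisely this last point: rigorously tracking the weight $(1+|x_1|^2)^{r/2}$ through the transform. The clean orthogonality computation that trivializes the unweighted case does not directly see the weight, and one must argue that $\J_{\R}$ intertwines multiplication by the polynomial weight in $x_1$ with a suitable $\alpha$-derivative operator on the quasi-periodic bundle, control the commutators, and check that fractional and negative $r$ are reached by interpolation and duality without losing the isometry constant. Everything else — the $L^2$ isometry, the inversion formula, and $\J_{\R}^*=\J_{\R}^{-1}$ — is a short orthogonality argument; citing the corresponding results in \cite{Lechl2015e,Lechl2016} for the $2$D analogue would let one keep this appendix self-contained yet brief.
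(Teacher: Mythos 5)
Your $L^2$ orthogonality computation, the identification of $C_\Lambda=[\Lambda/(2\pi)]^{1/2}$, and the unitarity conclusion at $r=s=0$ are all sound, and for what it is worth the paper itself offers no proof here — the theorem is recalled from \cite{Lechl2015e,Lechl2016} — so you are not deviating from any argument in the text. There is, however, one step that fails as written: you argue that because the integral formula is a \emph{left} inverse on a dense set and $\J_\R$ is a bounded isometry, $\J_\R$ must be unitary, hence surjective. A bounded isometry with a bounded left inverse need not be surjective (the right-shift on $\ell^2$ has the left-shift as a left inverse). To close this you must also check the \emph{right}-inverse identity $\J_\R\mathcal{I}\psi=\psi$ on a dense subset of $H_0^0(\Wast;H^0_\alpha(\W))$ — e.g.\ on functions that are finite Fourier sums $\psi(\alpha,x_1)=\sum_{|\ell|\le L}\hat\psi(\ell,x_1)e^{\i\alpha\Lambda\ell}$ with the required quasi-periodicity — which is again a one-line orthogonality computation; only then does surjectivity, and with it $\J_\R^*=\J_\R^{-1}$, follow.

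On the weighted index $r$, your plan (intertwining the weight with $\i\partial_\alpha$, then interpolating and dualizing) can be made to work but is the hard way around, and you correctly flag it as the fragile point. The route implicit in the paper's own Remark~\ref{rem:Four} is essentially tautological and avoids commutator estimates entirely: the $\ell$-th Fourier coefficient of $\J_\R\phi$ in $\alpha$ is, up to the fixed constant, exactly the cell restriction $\phi(\cdot+\Lambda\ell)|_{\W}$, and the norm on $H_0^r(\Wast;H^s_\alpha(\W))$ is \emph{defined} as $\sum_\ell(1+|\ell|^2)^r\|\hat\psi(\ell,\cdot)\|^2_{H^s_\alpha(\W)}$. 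Since $(1+|x_1|^2)^{r/2}$ is comparable to $(1+|\ell|^2)^{r/2}$ uniformly for $x_1\in\Lambda\ell+\W$, the weighted claim for all $r$ (and, with $s$ handled by your derivative-commutation argument, for all $s$) reduces to the unweighted $L^2$ identity you already proved — with the caveat that ``isometric'' then holds for the equivalent norm $\bigl(\sum_\ell(1+|\ell|^2)^r\|\phi(\cdot+\Lambda\ell)\|^2_{H^s(\W)}\bigr)^{1/2}$ on $H^s_r(\R)$, a normalization point worth stating explicitly. I would recommend restructuring the general-$(r,s)$ part of your argument around this Fourier-coefficient characterization rather than around $\i\partial_\alpha$.
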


We can extend the definition of $\J_{\R}$ to $n$-dimensional periodic domains similarly. Suppose $\Omega\subset\R^2$ is periodic in $x_1$-direction with period $\Lambda$, i.e., for any $x\in\Omega$, the point $x+(\Lambda { j},0)^\top\in\Omega,\,\forall{ j}\in\Z$. Moreover, assume there is a $L>0$ such that $\Omega\subset\R\times[-L,L]$. Define one periodic cell $\Omega^\Lambda=\Omega\cap\left[\W\times\R\right]$. For any $\phi\in C_0^\infty(\Omega)$, define the (partial) two dimensional Bloch transform $\J_\Omega$ of $\phi$ as
\begin{equation*}
\left(\J_\Omega\phi\right)({\alpha},{ x})=C_\Lambda\sum_{{j}\in\Z}\phi\left({ x}+\left(\begin{matrix}
\Lambda {j}\\0
\end{matrix}\right)\right)e^{-\i{\alpha}\Lambda{j}},\quad {\alpha}\in\R,\,{ x}\in\Omega^\Lambda.
\end{equation*}
\begin{remark}
Though the definition of the domain $\Omega$ and $\Omega^\Lambda$ might be different in the rest of this paper, we will use $\J_\Omega$ to be the $n$-dimensional (partial) Bloch transform on periodic domains with period $\Lambda$.
\end{remark}
We can also define the weighted Sobolev  space on the strip $\Omega$
\begin{equation*}
H_r^s(\Omega):=\left\{\phi\in \mathcal{D}'(\Omega):\,(1+|x|^2)^{r/2}\phi(x)\in H^s(\Omega)\right\}.
\end{equation*}
For $\ell\in\N$, $s\in\R$, we can also define
\begin{equation*}
H^\ell(\Wast;H^s(\Omega^\Lambda)):=\left\{\psi\in\mathcal{D}'(\Wast\times\Omega^\Lambda):\,\sum_{m=0}^\ell\int_\Wast\left\|\partial^m_{\alpha}\psi({\alpha},\cdot)\right\|\d{\alpha}<\infty\right\},
\end{equation*}
and extend to any $r\in\R$ by interpolation and duality  similarly. The space $H_0^r(\Wast;H_\alpha^s(\Omega^\Lambda))$ could be defined in the same way. We can have the following properties for the $n$-dimensional (partial) Bloch transform $\J_\Omega$.

\begin{theorem}\label{th:Bloch_property}
The Bloch transform $\J_\Omega$ extends to an isometric isomorphism between $H_r^s(\Omega)$ and $H_0^r(\Wast;H_\alpha^s(\Omega^\Lambda))$ for any $s,r\in\R$. Its inverse has the form of
\begin{equation*}
(\J^{-1}_\Omega\psi)\left({ x}+\left(\begin{matrix}
\Lambda { j}\\0
\end{matrix}\right)\right)=\left[\frac{\Lambda}{2\pi}\right]^{1/2}\int_\Wast \psi({\alpha},{ x})e^{\i{\alpha}\Lambda{ j}}\d{\alpha},\quad x_1\in\Omega^\Lambda,\,{j}\in\Z.
\end{equation*}
When $r=s=0$, the adjoint operator $\J^*_\Omega$ equals to the inverse $\J^{-1}_\Omega$. 
\end{theorem}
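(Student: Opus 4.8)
The plan is to obtain the statement by bootstrapping from the one-dimensional result for $\J_\R$ already recorded above, exploiting that the partial Bloch transform $\J_\Omega$ acts only on the $x_1$-variable and therefore interacts very simply with differentiation in $x_2$. Since the periodic domain $\Omega$ is $\Lambda$-periodic in $x_1$ and contained in a strip $\R\times[-L,L]$, I would first remove the geometry: fix a bi-Lipschitz change of variables $\Theta(x_1,t)=(x_1,\Psi(x_1,t))$ with $\Psi$ $\Lambda$-periodic in $x_1$ and $t$ ranging over a fixed bounded interval $I$, so that composition with $\Theta$ is an isomorphism between the corresponding weighted Sobolev scales and, because $\Theta$ commutes with the shifts $x\mapsto x+(\Lambda j,0)^\top$, intertwines $\J_\Omega$ with the Bloch transform on the product strip $\R\times I$. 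It then suffices to prove the theorem on $\Omega=\R\times I$, where the $x_2$-regularity can be carried along separately.

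The heart of the argument is a short list of identities, each verified first on smooth compactly supported $\phi$ and then extended by density. First, the inversion formula: reading $(\J_\Omega\phi)(\alpha,x)=C_\Lambda\sum_{j\in\Z}\phi(x+(\Lambda j,0)^\top)e^{-\i\alpha\Lambda j}$ as a Fourier series in $\alpha$ over $\Wast$ and using $\int_\Wast e^{\i\alpha\Lambda(j-j')}\d\alpha=\Lambda^*\delta_{jj'}$ recovers $\phi(x+(\Lambda j,0)^\top)=C_\Lambda\int_\Wast(\J_\Omega\phi)(\alpha,x)e^{\i\alpha\Lambda j}\d\alpha$. Second, the $L^2$-isometry $\|\J_\Omega\phi\|_{L^2(\Wast\times\Omega^\Lambda)}=\|\phi\|_{L^2(\Omega)}$, which follows from the same orthogonality after expanding the modulus squared, since $C_\Lambda^2\Lambda^*=1$ and $\sum_{j}\|\phi\|_{L^2(\Omega^\Lambda+(\Lambda j,0)^\top)}^2=\|\phi\|_{L^2(\Omega)}^2$. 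Third, the intertwining relations $\J_\Omega(\partial_{x_i}\phi)=\partial_{x_i}(\J_\Omega\phi)$, $i=1,2$. Fourth, the weight identity $\J_\Omega(x_1\phi)=x_1\,\J_\Omega\phi+\i\,\partial_\alpha(\J_\Omega\phi)$, which converts polynomial growth of order $r$ in $x$ on the physical side into $r$ derivatives in $\alpha$ on the Bloch side. Since $\partial_{x_2}$ passes through $\J_\Omega$ untouched, one climbs from the $L^2$-isometry to arbitrary integer $s$ by the third identity together with $\partial_{x_1}:H_\alpha^s(\Omega^\Lambda)\to H_\alpha^{s-1}(\Omega^\Lambda)$, and to arbitrary integer $r$ by iterating the fourth identity against the $H^r$-in-$\alpha$ structure; boundedness of the inverse comes from the inversion formula, so $\J_\Omega$ is an isomorphism for integer $s,r$. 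Interpolation then yields the positive real exponents and duality the negative ones, precisely as the spaces $H_0^r(\Wast;H_\alpha^s(\Omega^\Lambda))$ are themselves defined. Finally, for $r=s=0$ the operator $\J_\Omega$ is a surjective isometry of Hilbert spaces, hence $\J^*_\Omega=\J^{-1}_\Omega$ automatically, with the explicit form of the inverse supplied by the inversion formula.

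I expect the real work to lie in the low-regularity bookkeeping rather than in any single computation: one must check that the $\alpha$-quasi-periodic Sobolev scale $H_\alpha^s(\Omega^\Lambda)$ on the merely Lipschitz cell $\Omega^\Lambda$ interpolates and dualises as expected, that smooth compactly supported functions are dense in each $H_r^s(\Omega)$ so that the identities above genuinely determine $\J_\Omega$, and that the weight-versus-$\alpha$-regularity correspondence from the fourth identity survives the interpolation and duality passages. These are exactly the places where the Lipschitz regularity of the periodic surface is used; away from them the proof is a routine transcription of the one-dimensional theorem.
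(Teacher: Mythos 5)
The paper does not actually prove Theorem \ref{th:Bloch_property}: it is recalled in the appendix as a known property of the Floquet--Bloch transform, imported from the literature (essentially \cite{Lechl2015e,Lechl2016}), so there is no in-paper argument to compare yours against. Your outline is nonetheless a correct reconstruction of the standard proof. The orthogonality relation $\int_{\Wast}e^{\i\alpha\Lambda(j-j')}\d{\alpha}=\Lambda^*\delta_{jj'}$ together with $C_\Lambda^2\Lambda^*=1$ gives both the inversion formula and the $L^2$-isometry (hence $\J_\Omega^*=\J_\Omega^{-1}$ at $r=s=0$); commutation with $\partial_{x_1},\partial_{x_2}$ handles integer $s$; the identity $\J_\Omega(x_1\phi)=x_1\,\J_\Omega\phi+\i\,\partial_\alpha(\J_\Omega\phi)$ trades polynomial weights for $\alpha$-derivatives and handles integer $r$; and interpolation plus duality finish, in exactly the order in which the target spaces $H_0^r(\Wast;H_\alpha^s(\Omega^\Lambda))$ are themselves defined in the appendix.

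Two small remarks. First, the flattening map $\Theta$ is superfluous: $\J_\Omega$ acts only by $x_1$-shifts, and the decomposition $\Omega=\bigcup_{j\in\Z}\bigl(\Omega^\Lambda+(\Lambda j,0)^\top\bigr)$ already reduces every identity to the periodic cell, so no change of variables is needed. Indeed, for the general periodic $\Omega\subset\R\times[-L,L]$ of the appendix such a graph-type $\Theta$ need not exist, so it is better to avoid it; the Lipschitz structure is only needed for the density, interpolation and duality properties of the cell spaces that you correctly flag as the delicate bookkeeping. Second, be careful with the word \emph{isometric} away from $r=s=0$: with the Fourier-coefficient norm of Remark \ref{rem:Four} on $H_0^r(\Wast;H_\alpha^s(\Omega^\Lambda))$ and the natural weighted norm on $H_r^s(\Omega)$, the general statement is an isomorphism with norm equivalence (isometry holds for the canonically matched norms, in particular at $r=s=0$); your proof delivers exactly this, which is all the paper uses.
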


Another important property of the Bloch transform is the commutes with partial derivatives, see \cite{Lechl2016}. If $u\in H_r^n(\Omega)$ for some $n\in\N$, then for any ${\bm\gamma}=(\gamma_1,\gamma_2)\in\N^2$ with $|\gamma|=|\gamma_1|+|\gamma_2|\leq N$,
\begin{equation*}
\partial^{\bm\gamma}_{\bm x} \left(\J_\Omega u\right)({\bm\alpha},{\bm x})=\J_\Omega[\partial^{\bm\gamma} u]({\bm\alpha},{\bm x}).
\end{equation*}

\begin{remark}\label{rem:Four}
There is an alternative definition for the space $H_0^r(\Wast;X_{\alpha})$, where $X_{\alpha}$ is a family of Hilbert spaces that are ${\alpha}$-quasi-periodic in $x_1$. Let 
\begin{equation*}
\phi_{\Lambda^*}^{({ j})}({\alpha})=\frac{1}{\sqrt{|\Lambda^*|}}e^{\i{\alpha}\Lambda { j}},\,{ j}\in\Z
\end{equation*}
be a complete orthonormal system in $L^2(\Wast)$, then any function $\psi\in \mathcal{D}'(\Wast\times\Omega^\Lambda)$ has a Fourier series
\begin{equation*}
\psi({\alpha},{ x})=\frac{1}{\sqrt{\Lambda^*}}\sum_{{\ell}\in\Z}\hat{\psi}_{\Lambda^*}({\ell},{ x})e^{\i{\alpha}\Lambda{\ell}},
\end{equation*}
where $\hat{\psi}_{\Lambda^*}({\ell},{ x})=<\psi(\cdot,{ x}),\phi_{\Lambda^*}^{({\ell})}>_{L^2(\Wast)}$. Then the squared norm of any $\psi\in H_0^r(\Wast;X_{\alpha})$ equals to
\begin{equation*}
\|\psi\|^2_{H_0^r(\Wast;X_{\alpha})}=\sum_{{\ell}\in\Z}(1+|{\ell}|^2)^r\left\|\hat{\psi}_{\Lambda^*}({\ell},\cdot)\right\|^2_{X_{\alpha}}.
\end{equation*}
\end{remark}

\bibliographystyle{alpha}
\bibliography{Split.bib} 

\end{document}